\theoremstyle{plain}
\newtheorem{theorem}{Theorem}[section]
\newtheorem{lemma}[theorem]{Lemma}
\newtheorem{proposition}[theorem]{Proposition}
\newtheorem{Remark}[theorem]{Remark}
\journal{Journal of \LaTeX\ Templates}
\begin{document}

\begin{frontmatter}

\title{On nth Level Fractional Derivatives: An Equivalent Representation and Applications to Inverse Problem}
\author[label1]{Asim Ilyas}\ead{ailyas1@uninsubria.it, asim.ilyas8753@gmail.com}
\author[label2]{Salman A. Malik}\ead{salman.amin.malik@gmail.com, salman\_amin@comsats.edu.pk}
\author[label2]{Kamran Suhaib}\ead{kamran.suhaib@gmail.com}
\address[label1]{Department of Science and High Technology, University of Insubria, Como, Italy}
\address[label2]{Department of Mathematics, COMSATS University Islamabad,\\ Park Road, Chak Shahzad Islamabad, Pakistan}

\begin{abstract}
This work contributes to the theory of nth level fractional derivative, where $n$ is a positive integer. An equivalent representation of 2nd level fractional derivative in terms of Riemann-Liouville fractional derivative is presented. We generalized our result and provide representation of nth level fractional derivative. As an application, we solve an inverse problem defined for a diffusion equation involving 2nd level fractional derivative.
  
\end{abstract}

\begin{keyword}
\texttt{Inverse problem, nth level fractional derivatives, Diffusion equation, Mittag-Leffler function}
\end{keyword}

\end{frontmatter}

\linenumbers
\section{Introduction and Preliminaries}
In the last few years, Fractional Calculus (FC) has earned a name in the field of mathematics, physics, engineering and many other fields \cite{kilbas,Hilfer,Samko-Kilbas}. Mathematical models involving factional derivative of different kinds described the accurate process in many physical sciences. Caputo and Riemann-Liouville are the commonly used fractional derivatives in mathematical modeling. In \cite{Hilfer}, Hilfer introduced a new definition of fractional derivative known as left and right sided fractional derivative of order $0<\rho<1$ and type $0\leq\nu\leq1$, usualy known as Hilfer fractional derivative (HFD). HFD is the generalization of the Riemann-Liouville fractional derivative (RLFD) and Caputo fractional derivative (CFD) with specific values of $0\leq\nu\leq1$. The properties as well as application related to HFD have been discussed in \cite{Hilfer,hilfer1}. In \cite{luchko}, Luchko presented a new generalization fractional ope rator of order $0<\rho\leq1$ and type $(\nu_{1},\nu_{2},...,\nu_{n})$ which is known as nth level fractional derivative (LFD). The nth LFD is the generalization of the $2$nd level fractional derivative (LFD) by plugging $n=2$, see \cite{luchko}. The $2$nd LFD is the generalization of into HFD, RLFD and CFD by taking the particular values of $(\nu_{1},\nu_{2})$. 

In this article, we established a relationship between the RLFD and the nth LFD by using some elementary arguments and also presented a relationship between RLFD and the $2$nd LFD. We discussed some basic properties of the $2$nd LFD and the nth LFD. 
\smallskip

The left and right sided Riemann-Liouville fractional integral of order $\rho$ for $f\in L^1([0,1])$ 
are \cite{Samko-Kilbas} defined as
$$J_{0_+,t}^{\rho}f(t)=\frac{1}{\Gamma(\rho)}
\int_{0}^{t}(t-\tau)^{\rho-1}f(\tau)
d\tau,$$
and
$$J_{1_-,t}^{\rho}f(t)=\frac{1}{\Gamma(\rho)}
\int_{t}^{1}(\tau-t)^{\rho-1}f(\tau)\;
d\tau,$$
respectively.

\begin{lemma}\label{semigrp}(cf. \cite{Samko-Kilbas}, formula 2.21)
	If $\rho_{1}>0,\;\rho_{2}>0$ and $f\in L^1([0,1]),$ then
	\begin{align*}
		\left(J_{0_+,t}^{\rho_{1}}J_{0_+,t}^{\rho_{2}}f\right)(t)=&\left(J_{0_+,t}^{\rho_{1}+\rho_{2}}f\right)(t),\qquad t\in[0,1]\;\; a.e.
	\end{align*}
\end{lemma}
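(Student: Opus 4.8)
The plan is to prove the semigroup property of the Riemann-Liouville fractional integral by writing out both compositions explicitly and reducing the computation to a known Beta-integral identity. Since this is the classical formula (2.21) from Samko-Kilbas, the natural route is a direct calculation using Fubini's theorem to interchange the order of integration, followed by the substitution that produces the Beta function.

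First I would write $\left(J_{0_+,t}^{\rho_{1}}J_{0_+,t}^{\rho_{2}}f\right)(t)$ in full as a double integral,
\begin{equation*}
\frac{1}{\Gamma(\rho_{1})\Gamma(\rho_{2})}\int_{0}^{t}(t-s)^{\rho_{1}-1}\int_{0}^{s}(s-\tau)^{\rho_{2}-1}f(\tau)\,d\tau\,ds.
\end{equation*}
Since $f\in L^{1}([0,1])$ and the kernels are integrable, Fubini's theorem applies and I would interchange the order of integration, rewriting the region $0\le\tau\le s\le t$ so that $\tau$ is the outer variable and $s$ runs over $[\tau,t]$. This isolates an inner integral $\int_{\tau}^{t}(t-s)^{\rho_{1}-1}(s-\tau)^{\rho_{2}-1}\,ds$ that depends only on $t-\tau$.

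Next I would evaluate that inner integral by the substitution $s=\tau+(t-\tau)u$ with $u\in[0,1]$, which factors out $(t-\tau)^{\rho_{1}+\rho_{2}-1}$ and leaves $\int_{0}^{1}(1-u)^{\rho_{1}-1}u^{\rho_{2}-1}\,du=B(\rho_{2},\rho_{1})=\Gamma(\rho_{1})\Gamma(\rho_{2})/\Gamma(\rho_{1}+\rho_{2})$. Substituting this back cancels the $\Gamma(\rho_{1})\Gamma(\rho_{2})$ prefactor and yields exactly
\begin{equation*}
\frac{1}{\Gamma(\rho_{1}+\rho_{2})}\int_{0}^{t}(t-\tau)^{\rho_{1}+\rho_{2}-1}f(\tau)\,d\tau=\left(J_{0_+,t}^{\rho_{1}+\rho_{2}}f\right)(t).
\end{equation*}

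The main obstacle is the rigorous justification of the Fubini interchange, since the result is only claimed almost everywhere: I would need to confirm that the double integral of $(t-s)^{\rho_{1}-1}(s-\tau)^{\rho_{2}-1}|f(\tau)|$ over the triangle is finite, which follows from the same Beta-integral bound together with $f\in L^{1}$, guaranteeing absolute convergence for a.e. $t\in[0,1]$. Everything else is a routine change of variables, so once integrability is secured the identity follows directly.
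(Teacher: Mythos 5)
Your proof is correct. The paper itself does not prove this lemma; it is quoted with a citation to Samko--Kilbas (formula 2.21), and your Fubini-plus-Beta-function computation is precisely the standard argument behind that reference: interchange the order of integration over the triangle $0\le\tau\le s\le t$, substitute $s=\tau+(t-\tau)u$ to produce $B(\rho_{2},\rho_{1})=\Gamma(\rho_{1})\Gamma(\rho_{2})/\Gamma(\rho_{1}+\rho_{2})$, and cancel the prefactor. Your handling of the a.e.\ qualifier is also the right one; if you want to state it maximally cleanly, apply Tonelli to the nonnegative integrand with $|f|$ in place of $f$, which shows the iterated absolute integral equals $\Gamma(\rho_{1})\Gamma(\rho_{2})\left(J_{0_+,t}^{\rho_{1}+\rho_{2}}|f|\right)(t)$, finite for a.e.\ $t$ since $J_{0_+,t}^{\rho_{1}+\rho_{2}}|f|\in L^{1}([0,1])$, and then Fubini is licensed exactly on that full-measure set.
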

The left and right sided RLFD of order $\rho\in(0,1)$ for $f\in L^{1}([0,1])$ are  defined \cite{Gara} as  \[
{}^{RL}D^{\rho}_{0_+,t}f(t)
=\frac{1}{\Gamma(1-\rho)}\frac{d}{dt}\int\limits^{t}_{0}
\frac{f(\tau)}{(t-\tau)^{\rho}}\;d\tau= \frac{d}{dt}J^{1-\rho}_{0_+,t}f(t),\qquad t >0,
\]
and
\[
{}^{RL}D^{\rho}_{1_-,t}f(t)
=\frac{1}{\Gamma(1-\rho)}\frac{d}{dt}
\int\limits^{1}_{t}
\frac{f(\tau)}{(\tau-t)^{\rho}}\;d\tau= \frac{d}{dt}J^{1-\rho}_{1_-,t}f(t),\qquad t <1,
\]
respectively.

\vspace{0.3cm}
Let us recall the first and second fundamental Theorem of Fractional Calculus for RLFD in the following proposition;
\begin{proposition}(\cite{kilbas}, Lemmas 2.4, 2.5(a))\label{22}
	Let $0<\rho<1$
	\begin{itemize}
			\item [(1)] If $f(t)\in L^{1}([0,1])$, then	$$\left({}^{RL}D_{0_+,t}^{\rho}J_{0_+,t}^{\rho}f\right)(t)=
			f(t).$$
			\item [(2)] If $f(t)\in J_{0_+,t}^{\rho}(L^{1}),$ then	$$\left(J_{0_+,t}^{\rho}{}^{RL}D_{0_+,t}^{\rho}f\right)(t)=
			f(t),$$
			where the spaces of functions $ J_{0_+,t}^{\rho}(L^{1})$ is defined as
			$$J_{0_+,t}^{\rho}(L^{1})=\{f:\;f=J_{0_+,t}^{\rho}\psi,\;\;\psi\in L^{1}([0,1])\}$$
	\end{itemize}
\end{proposition}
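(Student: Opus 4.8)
The plan is to obtain both identities from the definition ${}^{RL}D^{\rho}_{0_+,t} = \frac{d}{dt} J^{1-\rho}_{0_+,t}$ combined with the semigroup law of Lemma~\ref{semigrp}. For part (1), I would first unfold the outer derivative so that $\left({}^{RL}D_{0_+,t}^{\rho} J_{0_+,t}^{\rho} f\right)(t) = \frac{d}{dt} J_{0_+,t}^{1-\rho} J_{0_+,t}^{\rho} f(t)$. Since $0<\rho<1$ ensures $1-\rho>0$ and $\rho>0$, Lemma~\ref{semigrp} merges the two integrals into a single integral of order one, $J_{0_+,t}^{1-\rho} J_{0_+,t}^{\rho} f = J_{0_+,t}^{1} f$. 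As the kernel then reduces to $1$ and $\Gamma(1)=1$, one has $J_{0_+,t}^{1} f(t)=\int_0^t f(\tau)\,d\tau$, so the whole expression becomes $\frac{d}{dt}\int_0^t f(\tau)\,d\tau$, which equals $f(t)$ for almost every $t$.

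For part (2), I would use the defining property of the domain: $f\in J_{0_+,t}^{\rho}(L^1)$ means $f = J_{0_+,t}^{\rho}\psi$ for some $\psi\in L^1([0,1])$. Applying part (1) to $\psi$ gives ${}^{RL}D_{0_+,t}^{\rho} f = {}^{RL}D_{0_+,t}^{\rho} J_{0_+,t}^{\rho}\psi = \psi$, and integrating back yields $J_{0_+,t}^{\rho} {}^{RL}D_{0_+,t}^{\rho} f = J_{0_+,t}^{\rho}\psi = f$, which is the assertion. Thus part (2) follows immediately from part (1) together with the structure of the space $J_{0_+,t}^{\rho}(L^1)$.

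The main obstacle is the closing step of part (1), namely justifying $\frac{d}{dt}\int_0^t f(\tau)\,d\tau = f(t)$ for a merely integrable $f$; this is precisely the Lebesgue differentiation theorem, and it is the reason the identity holds only almost everywhere rather than pointwise. Every other step is a routine algebraic rearrangement once the semigroup law is available. It is also worth noting why the two hypotheses differ: the right inverse in part (1) holds for all $f\in L^1$, whereas the left inverse in part (2) genuinely requires $f$ to lie in the range $J_{0_+,t}^{\rho}(L^1)$ of the fractional integral, reflecting the loss of information incurred by differentiation.
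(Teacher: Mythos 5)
Your proof is correct and there is nothing to compare it against inside the paper: the authors state this proposition without proof, citing Lemmas 2.4 and 2.5(a) of \cite{kilbas}, and the argument given in that reference is exactly the one you propose --- for part (1), write $\left({}^{RL}D_{0_+,t}^{\rho}J_{0_+,t}^{\rho}f\right)(t)=\frac{d}{dt}\left(J_{0_+,t}^{1-\rho}J_{0_+,t}^{\rho}f\right)(t)=\frac{d}{dt}\int_0^t f(\tau)\,d\tau$ via the semigroup law and conclude by Lebesgue differentiation, and for part (2), reduce to part (1) by writing $f=J_{0_+,t}^{\rho}\psi$. Your closing remarks are also the right ones: the identity in (1) holds only almost everywhere, and the hypothesis $f\in J_{0_+,t}^{\rho}(L^{1})$ in (2) is genuinely needed, since applying (1) to $\psi$ and then integrating is what makes the left inverse work.
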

\smallskip

The RLFD of the function, i.e., $\Phi(t)=t^{\alpha},\; \alpha>-1$;
\begin{eqnarray}\label{deri}
	{}^{RL}D^{\rho}_{0_+,t}t^{\alpha}&=&\frac{\Gamma(\alpha+1)}{\Gamma(\alpha-\rho+1)}\;t^{\alpha-\rho},\quad 0<\rho<1.
\end{eqnarray}
\smallskip


The left and right sided CFD of $0<\rho<1$ for  $f\in AC([0,1])$ are defined \cite{Gara} as
\[
{}^{C}D^{\rho}_{0_+,t}f(t)
=\frac{1}{\Gamma(1-\rho)}\int\limits^{t}_{0}
\frac{f{'}(\tau)}{(t-\tau)^{\rho}}\;d\tau= J^{1-\rho}_{0_+,t} \frac{d}{dt}f(t),\qquad t >0,
\]and
\[
{}^{C}D^{\rho}_{1_-,t}f(t)
=\frac{1}{\Gamma(1-\rho)}\int\limits^{1}_{t}
\frac{f'(\tau)}{(\tau-t)^{\rho}}\;d\tau= J^{1-\rho}_{1_-,t} \frac{d}{dt}f(t),\qquad t <1,
\]
respectively.
\smallskip

The relation between the left and right sided RLFD and CFD of order $0<\rho<1$ is defined in \cite{kilbas} and is given by
\begin{align*}
	{}^{C}D^{\rho}_{0_+,t}f(t)=&{}^{RL}D^{\rho}_{0_+,t}\big(f(t)-f(0)\big),\\{}^{C}D^{\rho}_{1_-,t}f(t)=&{}^{RL}D^{\rho}_{1_-,t}\big(f(t)-f(1)\big).
\end{align*} 

The left and right sided HFD of order $\rho$ and type $\nu$ is \cite{Hilfer} defined as
\begin{align*}\Big({}^{H}D_{0_+,t}^{\rho, \nu}f\Big)(t)=& \left(J_{0_+,t}^{\nu(1-\rho)}\frac{d}{dt}J_{0_+,t}^{(1-\rho)(1-\nu)}f\right)(t),\\
	\Big({}^{H}D_{1_-,t}^{\rho, \nu}f\Big)(t)=& \left(J_{1_-,t}^{\nu(1-\rho)}\frac{d}{dt}J_{1_-,t}^{(1-\rho)(1-\nu)}f\right)(t),\end{align*} where $ 0<\rho<1$, $0\leq \nu\leq1,$  $f\in L^{1}([0,1])$,   ${J_{0_+,t}^{(1-\rho)(1-\nu)}}f\in AC([0,1]).$
\begin{Remark} We have the following comments, see \cite{Furati}\\
	The HFD ${}^{H}D_{0_+,t}^{\rho, \nu}f$ can be written as
	\begin{align*}\Big({}^{H}D_{0_+,t}^{\rho, \nu}f\Big)(t)&= \left(J_{0_+,t}^{\nu(1-\rho)}\frac{d}{dt}J_{0_+,t}^{1-\eta}f\right)(t)\\&=\left(J_{0_+,t}^{\nu(1-\rho)}{}^{RL}D_{0_+,t}^{\eta}f\right)(t)\\&=\left(J_{0_+,t}^{\eta-\rho}\;{}^{RL}D_{0_+,t}^{\eta}f\right)(t),\end{align*}
	where $\eta=\rho+\nu-\rho\nu$ and the parameter $\eta$ satisfies
	$$0<\eta\leq1,\qquad \eta\geq\rho,\qquad \eta>\nu,\qquad 1-\eta<1-\nu(1-\rho).$$ 
	The HFD is the generalization of the RLFD and the CFD.\\
	For $\nu =0,$ the HFD becomes the RLFD i.e.,
	$$D_{0_+,t}^{\rho,0}f(t)= \frac{d}{dt}J_{0_+,t}^{1-\rho}f(t):= {}^{RL}D_{0_+,t}^{\rho}f(t).$$
	For $\nu =1,$ the HFD reduces to the CFD i.e.,
	$$D_{0_+,t}^{\rho,1}f(t)= J_{0_+,t}^{1-\rho}\frac{d}{dt}f(t):= {}^{C}D_{0_+,t}^{\rho}f(t).$$
\end{Remark}
\vspace{0.3cm}
{
	For the better understanding of the reader, let us start with the necessary space of functions required to define the 2nd LFD. The space for the $2$nd LFD is given by
}
$$W_{2L}^{1}([0,1])=\Big\{f: J_{0_+,t}^{2-\rho-\nu_{1}-\nu_{2}}f\in AC([0,1]), J_{0_+,t}^{\nu_2}\frac{d}{dt}J_{0_+,t}^{2-\rho-\nu_{1}-\nu_2}f \in AC([0,1])\Big\}.$$ 
The $2$nd LFD of order $ \rho$ and type $(\nu_{1}, \nu_{2})$ for $f\in W_{2L}^{1}([0,1])$, is defined \cite{luchko} as
\begin{equation}\label{2lfd}
	{}^{2L}D_{0_+,t}^{\rho,(\nu_{1},\nu_2)}f(t)= \left(J_{0_+,t}^{\nu_{1}}\frac{d}{dt}J_{0_+,t}^{\nu_{2}}\frac{d}{dt}J_{0_+,t}^{(2-\rho-\nu_{1}-\nu_{2})}f\right)(t),
\end{equation}
$$0<\rho\leq1,\;\; 0\leq \nu_1,\;\; 0\leq\nu_2,\;\;\rho+\nu_{1}\leq1,\;\; \rho+\nu_{1}+\nu_{2}\leq2.$$
\begin{Remark}\label{rmk2nd} Let us provide some useful remarks before we proceed further;\\
	define $\xi_{1}=\rho+\nu_{1}+\nu_{2}-1,$ notice that under the condition on $\rho,\; \nu_{1},\;\nu_{2}$ is the definition of $2$nd LFD, we have $\; 0<\xi_{1}\leq1$ and we can write the $2$nd level fractional derivative ${}^{2L}D_{0_+,t}^{\rho,(\nu_{1},\nu_2)}$ as 
	\begin{align*}\Big({}^{2L}D_{0_+,t}^{\rho,(\nu_{1},\nu_2)}f\Big)(t)&= \left(J_{0_+,t}^{\nu_{1}}\frac{d}{dt}J_{0_+,t}^{\nu_{2}}\frac{d}{dt}J_{0_+,t}^{1-\xi_{1}}f\right)(t)\\&=\left(J_{0_+,t}^{\nu_{1}}\frac{d}{dt}J_{0_+,t}^{\xi_{1}-\rho-\nu_{1}+1}\;{}^{RL}D_{0_+,t}^{\xi_{1}}f\right)(t),\end{align*} \\
	The parameter $\xi_{1}$ satisfies
	$$0<\xi_{1}\leq1,\qquad \xi_{1}+1\geq\rho+\nu_{1},\qquad \xi_{1}+1\geq\rho+\nu_{2}.$$
	\newline
	\noindent and by taking $\xi_{2}=1-\nu_{2},\;\; (\nu_{2}=\xi_{1}-\rho-\nu_{1}+1),$\; we have \;$0<\xi_{2}\leq1$,  the $2$nd LFD ${}^{2L}D_{0_+,t}^{\rho,(\nu_{1},\nu_2)}$ can be written as
	\begin{eqnarray}\label{RL_2L}\Big({}^{2L}D_{0_+,t}^{\rho,(\nu_{1},\nu_2)}f\Big)(t)&=& \left(J_{0_+,t}^{\nu_{1}}\frac{d}{dt}J_{0_+,t}^{1-\xi_{2}}\;{}^{RL}D_{0_+,t}^{\xi_{1}}f\right)(t)\nonumber\\&=&\left(J_{0_+,t}^{\nu_{1}}\;{}^{RL}D_{0_+,t}^{\xi_{2}}\;{}^{RL}D_{0_+,t}^{\xi_{1}}f\right)(t),\end{eqnarray}
	The $2$nd LFD is the generalization of the HFD, the RLFD and the CFD, see \cite{Asim,luchko}.	\end{Remark}
\vspace{0.3cm}
In the similar fashion,  the space for the $3$rd LFD has the following form 
$$W_{3L}^{1}=\Big\{f: \Big(\displaystyle\prod_{k=j}^{3}\big(J_{0_+,t}^{\nu_{k}}\frac{d}{dt}\big)\Big) J_{0_+,t}^{(3-\rho-r_{3})}f \in AC([0,1]),\;j=2,3,3+1\Big\},$$
whereas $3$rd LFD of order $0<\rho\leq1$ is defined as
\begin{equation*}
	{}^{3L}D_{0_+,t}^{\rho,(\nu_{1},\nu_2, \nu_{3})}f(t)= \left(J_{0_+,t}^{\nu_{1}}\frac{d}{dt}J_{0_+,t}^{\nu_{2}}\frac{d}{dt}J_{0_+,t}^{\nu_{3}}\frac{d}{dt}J_{0_+,t}^{(2-\rho-\nu_{1}-\nu_{2}-\nu_{3})}f\right)(t),
\end{equation*}
$$ 0\leq \nu_1,\;\; 0\leq\nu_2,\;\; 0\leq\nu_3,\;\;\rho+\nu_{1}\leq1,\;\; \rho+\nu_{1}+\nu_{2}\leq2,\;\; \rho+\nu_{1}+\nu_{2}+\nu_{3}\leq3.$$
\begin{Remark} Let us mention some similar remarks about the $3$rd LFD;\\
	define $\xi_{1}=\rho+\nu_{1}+\nu_{2}+\nu_{3}-2,$ $\; 0<\xi_{1}\leq1$, we can write the $3$rd LFD ${}^{3L}D_{0_+,t}^{\rho,(\nu_{1},\nu_2, \nu_{3})};$ 
	\begin{align*}
		\Big({}^{3L}D_{0_+,t}^{\rho,(\nu_{1},\nu_2,\nu_{3})}f\Big)(t)=& \left(J_{0_+,t}^{\nu_{1}}\frac{d}{dt}J_{0_+,t}^{\nu_{2}}\frac{d}{dt}J_{0_+,t}^{\nu_{3}}\frac{d}{dt}J_{0_+,t}^{1-\xi_{1}}f\right)(t)\\=&\left(J_{0_+,t}^{\nu_{1}}\frac{d}{dt}J_{0_+,t}^{\xi_{1}-\rho-\nu_{1}-\nu_{3}+2}\frac{d}{dt}J_{0_+,t}^{\xi_{1}-\rho-\nu_{1}-\nu_{2}+2}\;{}^{RL}D_{0_+,t}^{\xi_{1}}f\right)(t),
	\end{align*}
	\newline
	\noindent Take $\xi_{2}=1-\nu_{3},$\;\;$(\nu_{3}=\xi_{1}-\rho-\nu_{1}-\nu_{2}+2),$ \;$0<\xi_{2}\leq1$,  the $3$rd LFD ${}^{3L}D_{0_+,t}^{\rho,(\nu_{1},\nu_2), \nu_{3}}$ can be written as
	\begin{align*}
		\Big({}^{3L}D_{0_+,t}^{\rho,(\nu_{1},\nu_2,\nu_{3})}f\Big)(t)=&\left(J_{0_+,t}^{\nu_{1}}\frac{d}{dt}J_{0_+,t}^{\xi_{1}-\rho-\nu_{1}-\nu_{3}+2}\frac{d}{dt}J_{0_+,t}^{1-\xi_{2}}\;{}^{RL}D_{0_+,t}^{\xi_{1}}f\right)(t)\\=&\left(J_{0_+,t}^{\nu_{1}}\frac{d}{dt}J_{0_+,t}^{\nu_{2}-\nu_{3}-\xi_{2}+1}\;{}^{RL}D_{0_+,t}^{\xi_{2}}\;{}^{RL}D_{0_+,t}^{\xi_{1}}f\right)(t).
	\end{align*}
	Let $\xi_{3}=1-\nu_{2},$\; \;$0<\xi_{3}\leq1$,  the $3$rd LFD ${}^{3L}D_{0_+,t}^{\rho,(\nu_{1},\nu_2), \nu_{3}}$ can be written as
	\begin{align*}
		\Big({}^{3L}D_{0_+,t}^{\rho,(\nu_{1},\nu_2,\nu_{3})}f\Big)(t)=&\left(J_{0_+,t}^{\nu_{1}}\frac{d}{dt}J_{0_+,t}^{1-\xi_{3}}\;{}^{RL}D_{0_+,t}^{\xi_{2}}\;{}^{RL}D_{0_+,t}^{\xi_{1}}f\right)(t)\\=&\left(J_{0_+,t}^{\nu_{1}}\;{}^{RL}D_{0_+,t}^{\xi_{3}}\;{}^{RL}D_{0_+,t}^{\xi_{2}}\;{}^{RL}D_{0_+,t}^{\xi_{1}}f\right)(t).
	\end{align*}
\end{Remark}
\vspace{0.3cm}
The space of functions for which the basic nth LFD  exists, is (see \cite{luchko} for more details)
\begin{equation}\label{nth}
	W_{nL}^{1}([0,1])=\Big\{f: \Big(\displaystyle\prod_{k=j}^{n}\big(J_{0_+,t}^{\nu_{k}}\frac{d}{dt}\big)\Big) J_{0_+,t}^{(n-\rho-r_{n})}f \in AC([0,1]),\;j=2,3,...,n+1\Big\},\end{equation}
where, for convenience, we define the following notation
\begin{equation}\label{rn est}
	r_{k}=\sum_{j=1}^{k}\nu_{j},\;\;k=1,2,...,n.
\end{equation}
The nth LFD of order $ 0<\rho\leq1$ and type $(\nu_{1},\nu_2,...,\nu_{n})$ for $f\in W_{nL}^{1}([0,1])$ is \cite{luchko} defined as

$$\Big({}^{nL}D_{0_+,t}^{\rho,(\nu_{1},\nu_2,...,\nu_{n})}f\Big)(t)=\bigg(\displaystyle\prod_{k=1}^{n}\big(J_{0_+,t}^{\nu_{k}}\frac{d}{dt}\big)J_{0_+,t}^{(n-\rho-r_{n})}f\bigg)(t),\;\; 0\leq r_k,\; \rho+r_{k}\leq k,$$
\begin{Remark}\label{rmknth} We have the following comments for the nth LFD;\\
	The nth LFD can be written as
	\begin{align*}
		\Big({}^{nL}D_{0_+,t}^{\rho,(\nu_{1},\nu_2,...,\nu_{n})}f\Big)(t)&=\bigg(\displaystyle\prod_{k=1}^{n}\big(J_{0_+,t}^{\nu_{k}}\frac{d}{dt}\big)J_{0_+,t}^{(n-\rho-r_{n})}f\bigg)(t)\\&= \left(J_{0_+,t}^{\nu_{1}}\displaystyle\prod_{k=1}^{n}\Big(\frac{d}{dt}J_{0_+,t}^{1-\xi_{k}}\Big)f\right)(t)\\&=\left(J_{0_+,t}^{\nu_{1}}\displaystyle\prod_{k=1}^{n}\;\big({}^{RL}D_{0_+,t}^{\xi_k}\big)f\right)(t),
	\end{align*}
		where $\xi_{1}=\rho+\displaystyle\sum_{k=1}^{n}\nu_{k}-(n-1)$,\; $0<\xi_{k}\leq1$ and $\xi_{i}=1-\nu_{n-i+2}$$,\;i=2,3,...,n$.\\
\end{Remark}
\noindent 


\vspace{0.3cm} 

After providing the fundamentals of fractional derivatives in the next section, we are going to present the main results of the article.  

\section{Alternate definition of the $2$nd level fractional derivative}
In this section, we are going to discuss the new way of expressing the 2nd LFD of order $\rho,\; 0<\rho<1$ and type $(\nu_{1},\nu_{2}),\;0\leq \nu_1,\; 0\leq\nu_2 .$ Before we proceed further, let us present the definition of the space $AC^{\xi}([0,1])$, see \cite{Bourdin}.
\newline

\nonumber Let $\xi_{k}\in(0,1),\;k=1,2,...n,$ we have  \begin{eqnarray}\label{define-f}
	AC^{\xi_{k}}([0,1])=\Big\{f:[0,1]\to\mathbb{R}^{n}/ f(t)=\frac{C_k}{\Gamma(\xi_{k})}t^{\xi_{k}-1}+\big(J_{0_+,t}^{\xi_{k}}\psi\big)(t), \;\;\text{ for some } C_k\in\mathbb{R}^n \text{ and } \psi\in L^1([0,1])\Big\},\end{eqnarray} 

Let us recall the following result (Theorem 2.1 of \cite{Bourdin}) 
\begin{theorem}\label{rei-theo}\cite{Bourdin}
	Let $\xi\in(0,1)$ and $f\in L^{1}([0,1])$. The function $f$ has the RLFD of order $\xi$ iff $f\in AC^{\xi}([0,1])$. Then
	\begin{equation*}\left({}^{RL}D_{0_+,t}^{\xi}f\right)(t)=\psi(t),\qquad 
		\mbox{and} \qquad \left(J_{0_+,t}^{1-\xi}f\right)(t)|_{t=0}=C_{0},\end{equation*}
	where $C_{0}\in\mathbb{R}^{n}$ is a constant and $\psi\in L^{1}([0,1])$.
\end{theorem}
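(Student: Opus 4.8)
The plan is to prove both implications by reducing everything to the ordinary primitive $g:=\left(J_{0_+,t}^{1-\xi}f\right)(t)$ and exploiting the semigroup law (Lemma~\ref{semigrp}) together with the explicit fractional integral of the power $t^{\xi-1}$. Throughout I read the hypothesis ``$f$ has the RLFD of order $\xi$'' as $g\in AC([0,1])$, so that $g$ is the indefinite integral of its a.e.\ derivative and ${}^{RL}D_{0_+,t}^{\xi}f=\frac{d}{dt}g\in L^{1}([0,1])$ is well defined; I would state this convention explicitly at the outset.

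For the sufficiency direction ($f\in AC^{\xi}([0,1])\Rightarrow$ the RLFD exists), I would substitute the representation $f(t)=\frac{C}{\Gamma(\xi)}t^{\xi-1}+\left(J_{0_+,t}^{\xi}\psi\right)(t)$ from \eqref{define-f} into $g$. The integral analogue of \eqref{deri}, namely $\left(J_{0_+,t}^{1-\xi}t^{\xi-1}\right)=\Gamma(\xi)$, turns the singular term into the constant $C$, while Lemma~\ref{semigrp} collapses the regular term to $J_{0_+,t}^{1-\xi}J_{0_+,t}^{\xi}\psi=J_{0_+,t}^{1}\psi=\int_{0}^{t}\psi(\tau)\,d\tau$. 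Hence $g(t)=C+\int_{0}^{t}\psi(\tau)\,d\tau\in AC([0,1])$, which at once gives $g(0)=C$ and $g'=\psi$; that is, ${}^{RL}D_{0_+,t}^{\xi}f=\psi$ and $\left(J_{0_+,t}^{1-\xi}f\right)(t)|_{t=0}=C$, with $C_{0}=C$.

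For the necessity direction I would run the same identities backwards. Assuming $g\in AC([0,1])$, set $\psi:={}^{RL}D_{0_+,t}^{\xi}f=g'$ and $C:=g(0)$; absolute continuity yields $g=C+\int_{0}^{t}\psi$, and rewriting $C=J_{0_+,t}^{1-\xi}\big[\frac{C}{\Gamma(\xi)}t^{\xi-1}\big]$ and $\int_{0}^{t}\psi=J_{0_+,t}^{1}\psi=J_{0_+,t}^{1-\xi}J_{0_+,t}^{\xi}\psi$ (again by \eqref{deri} and Lemma~\ref{semigrp}) gives $J_{0_+,t}^{1-\xi}\big(f-\frac{C}{\Gamma(\xi)}t^{\xi-1}-J_{0_+,t}^{\xi}\psi\big)=0$. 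Matching this against the definition \eqref{define-f} then requires only that $J_{0_+,t}^{1-\xi}$ be injective on $L^{1}([0,1])$.

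I expect that injectivity to be the one genuine (though mild) obstacle. I would dispatch it by applying $J_{0_+,t}^{\xi}$ and invoking the semigroup law once more: $J_{0_+,t}^{1-\xi}h=0$ forces $J_{0_+,t}^{1}h=\int_{0}^{t}h=0$ for every $t$, whence $h=0$ a.e. The only other point needing care is that the power $t^{\xi-1}$ belongs to $L^{1}([0,1])$ (valid since $\xi>0$), so that every fractional integral written above is legitimate; the remainder is bookkeeping with the two identities already recalled, and Proposition~\ref{22} provides an alternative route to the same reconstruction.
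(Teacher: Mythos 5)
Your proposal is correct, but there is nothing in the paper to measure it against: Theorem \ref{rei-theo} is imported from Bourdin and Idczak (Theorem 2.1 of the cited reference) and the paper gives no proof of it, using it only as a black box in the proofs of Theorems \ref{2nd-con} and \ref{nthfunc}. Your argument is a sound, self-contained reconstruction of the standard proof, built entirely from tools the paper does state: the semigroup law (Lemma \ref{semigrp}), the Beta-integral evaluation $J_{0_+,t}^{1-\xi}\bigl(\tfrac{C}{\Gamma(\xi)}\,t^{\xi-1}\bigr)=C$ (which, as you note, is the integral analogue of \eqref{deri} rather than \eqref{deri} itself), and the injectivity of $J_{0_+,t}^{1-\xi}$ on $L^{1}([0,1])$, which you correctly reduce to the vanishing of an ordinary primitive by composing with $J_{0_+,t}^{\xi}$. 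Two points deserve emphasis in a final write-up. First, your opening convention --- reading ``$f$ has the RLFD of order $\xi$'' as $J_{0_+,t}^{1-\xi}f\in AC([0,1])$ --- is not cosmetic but essential: with mere a.e.\ differentiability of $J_{0_+,t}^{1-\xi}f$ the equivalence is false, and the absolute-continuity reading is precisely the one under which the cited theorem (and its use in Theorem \ref{2nd-con}) is valid. Second, in the sufficiency direction the identity $J_{0_+,t}^{1-\xi}J_{0_+,t}^{\xi}\psi=J_{0_+,t}^{1}\psi$ needs to hold at every $t$, not merely a.e., in order to evaluate $g(0)=C$; this is a slight strengthening of Lemma \ref{semigrp} and follows by applying Tonelli's theorem to $|\psi|$, which shows both iterated integrals are finite and equal for all $t$. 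With those two sentences added, your argument is complete.
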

We prove a similar result to Theorem \ref{rei-theo} for the $2$nd LFD.
\begin{theorem}\label{2nd-con}
	Let $0<\rho\leq1$,\; $0\leq \nu_1,\; 0\leq\nu_2,\;\rho+\nu_{1}\leq1,\; \rho+\nu_{1}+\nu_{2}\leq2$,\;$f, f_{1}\in L^{1}([0,1])$, $\xi_{1}=\rho+\nu_{1}+\nu_{2}-1,$ and $\xi_{2}=1-\nu_{2}$. The function $f$ has the $2$nd LFD of order $\rho$ and types $(\nu_1, \nu_2)$ iff $f\in AC^{\xi_{1}}([0,1])\; \& \;f_{1}\in AC^{\xi_{2}}([0,1]),$
	we have
	$$\left({}^{2L}D_{0_+,t}^{\rho,(\nu_{1},\nu_{1})}f\right)(t)=\left(J_{0_+,t}^{\nu_{1}}\psi\right)(t),$$
	and
	\begin{eqnarray}\label{cdn 4}
		\left.
		\begin{aligned}
			\left({}^{RL}D_{0_+,t}^{\xi_{1}}f\right)(t)=&f_{1}(t),\qquad\left(J_{0_+,t}^{1-\xi_{1}}f\right)(t)|_{t=0}=C_1,\\
			\left({}^{RL}D_{0_+,t}^{\xi_{2}}f_{1}\right)(t)=&\psi(t),\qquad\left(J_{0_+,t}^{1-\xi_{2}}f_{1}\right)(t)|_{t=0}=C_2.
		\end{aligned}
		\right\}
	\end{eqnarray}
	where $C_{1}$ and $C_{2}$ are the constants appeared in the definitions of 	$f$ and $f_{1}:=\frac{d}{dt}J_{0_+,t}^{1-\xi_{1}}f$, respectively and $\psi\in L^{1}([0,1])$.
	Furthermore, we have the following relation

	\begin{eqnarray}=&&
	\left({}^{2L}D_{0_+,t}^{\rho,(\nu_{1},\nu_{1})}f\right)(t)=	{}^{RL}D_{0_+}^{\rho}\Bigg(f(.)-{C_3} t^{\rho+\nu_{1}+\nu_{2}-2}-{C_4} t^{\rho+\nu_{1}-1}\Bigg)(t).\label{ass}
	\end{eqnarray}
	where 
	\begin{eqnarray}\label{cdn 34}
		\left.
		\begin{aligned}
			\frac{\big(J_{0_+,t}^{1-\xi_{1}}f\big)(t)|_{t=0}}{\Gamma(\rho+\nu_{1}+\nu_{2}-1)}=&C_3,\\
			\frac{\Big(J_{0_+,t}^{\nu_{2}}\frac{d}{dt}J_{0_+,t}^{1-\xi_{1}}f\Big)(t)|_{t=0}}{\Gamma(\rho+\nu_{1})}=&C_4.
		\end{aligned}
		\right\}
	\end{eqnarray}
\end{theorem}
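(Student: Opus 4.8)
The plan is to build everything on the representation (\ref{RL_2L}) from Remark \ref{rmk2nd}, namely $\big({}^{2L}D_{0_+,t}^{\rho,(\nu_1,\nu_2)}f\big)(t)=\big(J_{0_+,t}^{\nu_1}\,{}^{RL}D_{0_+,t}^{\xi_2}\,{}^{RL}D_{0_+,t}^{\xi_1}f\big)(t)$, together with the characterization of Theorem \ref{rei-theo}. First I would establish the \emph{iff} part by reading off (\ref{RL_2L}) from the inside out: the $2$nd LFD exists precisely when the two nested RLFDs exist. Applying Theorem \ref{rei-theo} to the inner operator, ${}^{RL}D_{0_+,t}^{\xi_1}f$ exists iff $f\in AC^{\xi_1}([0,1])$, in which case ${}^{RL}D_{0_+,t}^{\xi_1}f=f_1$ and $(J_{0_+,t}^{1-\xi_1}f)(t)|_{t=0}=C_1$. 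Applying the same theorem to the outer operator, ${}^{RL}D_{0_+,t}^{\xi_2}f_1$ exists iff $f_1\in AC^{\xi_2}([0,1])$, giving ${}^{RL}D_{0_+,t}^{\xi_2}f_1=\psi$ and $(J_{0_+,t}^{1-\xi_2}f_1)(t)|_{t=0}=C_2$. These are exactly the relations (\ref{cdn 4}), and substituting $\psi$ back into (\ref{RL_2L}) yields the first formula $\big({}^{2L}D_{0_+,t}^{\rho,(\nu_1,\nu_2)}f\big)(t)=(J_{0_+,t}^{\nu_1}\psi)(t)$.

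For the representation (\ref{ass}) I would first pin down the constants. Since $\rho+\nu_1+\nu_2-1=\xi_1$, $\;\rho+\nu_1=\xi_1+\xi_2$, and $1-\xi_2=\nu_2$ (so $\tfrac{d}{dt}J_{0_+,t}^{1-\xi_1}f=f_1$ makes $J_{0_+,t}^{\nu_2}f_1=J_{0_+,t}^{1-\xi_2}f_1$), the defining relations (\ref{cdn 34}) reduce to $C_3=C_1/\Gamma(\xi_1)$ and $C_4=C_2/\Gamma(\xi_1+\xi_2)$. Next I would unwind the two $AC^{\xi}$ representations: from $f\in AC^{\xi_1}$, $f=\tfrac{C_1}{\Gamma(\xi_1)}t^{\xi_1-1}+J_{0_+,t}^{\xi_1}f_1$, and from $f_1\in AC^{\xi_2}$, $f_1=\tfrac{C_2}{\Gamma(\xi_2)}t^{\xi_2-1}+J_{0_+,t}^{\xi_2}\psi$. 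Substituting the second into $J_{0_+,t}^{\xi_1}f_1$, using the semigroup law (Lemma \ref{semigrp}) $J_{0_+,t}^{\xi_1}J_{0_+,t}^{\xi_2}=J_{0_+,t}^{\xi_1+\xi_2}$ and the integral analogue of (\ref{deri}), $J_{0_+,t}^{\xi_1}t^{\xi_2-1}=\tfrac{\Gamma(\xi_2)}{\Gamma(\xi_1+\xi_2)}t^{\xi_1+\xi_2-1}$, collapses $f$ into the single identity $f(t)-C_3 t^{\xi_1-1}-C_4 t^{\xi_1+\xi_2-1}=\big(J_{0_+,t}^{\xi_1+\xi_2}\psi\big)(t)$, where I note the exponents $\xi_1-1=\rho+\nu_1+\nu_2-2$ and $\xi_1+\xi_2-1=\rho+\nu_1-1$ match those appearing in (\ref{ass}).

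Finally, applying ${}^{RL}D_{0_+,t}^{\rho}$ to this regularized function and invoking $\xi_1+\xi_2=\rho+\nu_1$ with Lemma \ref{semigrp} to split $J_{0_+,t}^{\rho+\nu_1}=J_{0_+,t}^{\rho}J_{0_+,t}^{\nu_1}$, the first fundamental theorem Proposition \ref{22}(1) gives ${}^{RL}D_{0_+,t}^{\rho}\big(J_{0_+,t}^{\rho}J_{0_+,t}^{\nu_1}\psi\big)=J_{0_+,t}^{\nu_1}\psi$, which is precisely the $2$nd LFD computed above; this closes (\ref{ass}). I expect the only real obstacle to be bookkeeping: verifying that the orders $\xi_1,\xi_2,\rho$ and the power exponent $\xi_2-1$ lie in the admissible ranges required by Theorem \ref{rei-theo}, Lemma \ref{semigrp}, the power rule (\ref{deri}), and Proposition \ref{22}(1). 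This is exactly where the hypotheses $0<\xi_1\le1$, $0<\xi_2\le1$ together with $\rho+\nu_1\le1$ and $\rho+\nu_1+\nu_2\le2$ are used, and no deeper analytic argument is needed beyond these checks.
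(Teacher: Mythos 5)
Your proposal is correct, and while your equivalence (``iff'') part coincides with the paper's own argument (Theorem \ref{rei-theo} applied twice through the factorization \eqref{RL_2L}), your proof of the representation \eqref{ass} takes a genuinely different route. The paper stays at the operator level: it inserts $\frac{d}{dt}J_{0_+,t}^{1}$ into $J_{0_+,t}^{\nu_1}\frac{d}{dt}J_{0_+,t}^{\nu_2}\frac{d}{dt}J_{0_+,t}^{1-\xi_1}f$, commutes integrals via Lemma \ref{semigrp}, applies the fundamental theorem of calculus twice to peel off the two initial-value terms, and finally identifies $\frac{d}{dt}J_{0_+,t}^{1-\rho}$ of the corrected function with the right-hand side of \eqref{ass}. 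You instead unwind the $AC^{\xi}$ structure itself: $f=\frac{C_1}{\Gamma(\xi_1)}t^{\xi_1-1}+J_{0_+,t}^{\xi_1}f_1$ and $f_1=\frac{C_2}{\Gamma(\xi_2)}t^{\xi_2-1}+J_{0_+,t}^{\xi_2}\psi$, which by the semigroup law and the power rule collapse to $f-C_3t^{\rho+\nu_1+\nu_2-2}-C_4t^{\rho+\nu_1-1}=J_{0_+,t}^{\rho+\nu_1}\psi$; your identification of the constants agrees with \eqref{cdn 34} because $1-\xi_2=\nu_2$ and $\xi_1+\xi_2=\rho+\nu_1$; and then \eqref{ass} is immediate from Proposition \ref{22}(1) applied to $J_{0_+,t}^{\rho}\big(J_{0_+,t}^{\nu_1}\psi\big)$, since $J_{0_+,t}^{\nu_1}\psi$ was already shown to equal the $2$nd LFD. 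Your route is shorter, avoids the interchanges of $\frac{d}{dt}$ with $J_{0_+,t}^{1}$, and makes transparent that the subtracted powers are precisely the kernel contributions of the two $AC^{\xi}$ representations; the paper's route has the merit of working directly from the defining expression \eqref{2lfd} without ever invoking the explicit $AC^{\xi}$ decompositions. Two bookkeeping points you should state explicitly: the integral power rule $J_{0_+,t}^{\xi_1}t^{\xi_2-1}=\frac{\Gamma(\xi_2)}{\Gamma(\xi_1+\xi_2)}t^{\xi_1+\xi_2-1}$ is not recorded in the paper (only the derivative version \eqref{deri} is) and requires $\xi_2>0$, which holds; and Proposition \ref{22} is stated for $0<\rho<1$, so the endpoint $\rho=1$ allowed by the theorem needs a trivial separate remark --- a gap the paper's own proof shares.
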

\begin{proof}
	Assume that $f$ possesses the $2$nd LFD of order $\rho$ and types $(\nu_1, \nu_2)$, then $J_{0_+,t}^{1-\xi_{1}}f\in AC([0,1]),\;  J_{0_+,t}^{1-\xi_{2}}f_{1}\in AC([0,1]),$ the RLFD of order $\xi_{1}$ and $\xi_{2}$ of the functions $f$ and $f_{1}$ exist. From definition of $2$nd LFD \eqref{2lfd}, Equation \eqref{RL_2L} and conditions \eqref{cdn 4}, we have
	\begin{align*}
		\left({}^{2L}D_{0_+,t}^{\rho,(\nu_{1},\nu_{2})}f\right)(t)=&\left(J_{0_+,t}^{\nu_{1}}\;{}^{RL}D_{0_+,t}^{\xi_{2}}\;{}^{RL}D_{0_+,t}^{\xi_{1}}f\right)(t)\nonumber\\=&\left(J_{0_+,t}^{\nu_{1}}{}^{RL}D_{0_+,t}^{\xi_{2}}f_{1}\right)(t)\nonumber\\=&\left(J_{0_+,t}^{\nu_{1}}\psi\right)(t).\end{align*}
	Conversely suppose that $f\in AC^{\xi_{1}}([0,1])$ and $f_{1}\in AC^{\xi_{2}}([0,1])$ and conditions \eqref{cdn 4}, we have
	\begin{align*}
		\left({}^{2L}D_{0_+,t}^{\rho,(\nu_{1},\nu_{2})}f\right)(t)=&\left(J_{0_+,t}^{\nu_{1}}\psi\right)(t).\end{align*}
	Using Theorem \ref{rei-theo} and conditions \eqref{cdn 4} , we have
	\begin{align*}
		\left({}^{2L}D_{0_+,t}^{\rho,(\nu_{1},\nu_{2})}f\right)(t)=&\left(J_{0_+,t}^{\nu_{1}}{}^{RL}D_{0_+,t}^{\xi_{2}}f_{1}\right)(t)\\=&\left(J_{0_+,t}^{\nu_{1}}\;{}^{RL}D_{0_+,t}^{\xi_{2}}\;{}^{RL}D_{0_+,t}^{\xi_{1}}f\right)(t).\end{align*}
	This shows that there exists the $2$nd LFD of order $\rho$ and type $(\nu_1, \nu_2)$ of $f$.
	\smallskip
	\vspace{0.2cm}
	
	Let us provide the identity given in \eqref{ass}. Since,
	$J_{0_+,t}^{1-\xi_{1}}f\in AC([0,1]),\;J_{0_+,t}^{1-\xi_{2}}f_{1}\in AC([0,1]).$  In view of Remark \ref{rmk2nd} and Lemma \ref{semigrp}, we have
	\begin{align*}
		\Big({}^{2L}D_{0_+,t}^{\rho,(\nu_{1},\nu_2)}f\Big)(t)&= \left(J_{0_+,t}^{\nu_{1}}\frac{d}{dt}J_{0_+,t}^{\nu_{2}}\frac{d}{dt}J_{0_+,t}^{1-\xi_{1}}f\right)(t)\\&
		=	\left(\frac{d}{dt}J_{0_+,t}^{1}J_{0_+,t}^{\nu_{1}}\frac{d}{dt}J_{0_+,t}^{\nu_{2}}\frac{d}{dt}J_{0_+,t}^{1-\xi_{1}}f\right)(t)
		\\&
		=	\left(\frac{d}{dt}J_{0_+,t}^{\nu_{1}}J_{0_+,t}^{1}\frac{d}{dt}J_{0_+,t}^{\nu_{2}}\frac{d}{dt}J_{0_+,t}^{1-\xi_{1}}f\right)(t).
	\end{align*}
	Using fundamental theorem of Calculus, we have
	\begin{align*}
		\Big({}^{2L}D_{0_+,t}^{\rho,(\nu_{1},\nu_2)}f\Big)(t)=&	\left(\frac{d}{dt}J_{0_+,t}^{\nu_{1}}\Big(J_{0_+,t}^{\nu_{2}}\frac{d}{dt}J_{0_+,t}^{1-\xi_{1}}f-J_{0_+,t}^{\nu_{2}}\frac{d}{dt}J_{0_+,t}^{1-\xi_{1}}f(t)|_{t=0}\Big)\right)(t)\\=&	\frac{d}{dt}\left(J_{0_+,t}^{\nu_{1}}J_{0_+,t}^{\nu_{2}}\frac{d}{dt}J_{0_+,t}^{1-\xi_{1}}f\right)(t)\\&-\frac{d}{dt}\left(J_{0_+,t}^{\nu_{1}}\Big(J_{0_+,t}^{\nu_{2}}\frac{d}{dt}J_{0_+,t}^{1-\xi_{1}}f(t)|_{t=0}\Big)\right)(t)\\=&	\frac{d}{dt}\left(\frac{d}{dt}J_{0_+,t}^{\nu_{1}+\nu_{2}}J_{0_+,t}^{1}\frac{d}{dt}J_{0_+,t}^{1-\xi_{1}}f\right)(t)\\&-\frac{d}{dt}\Bigg(\frac{\Big(J_{0_+,t}^{\nu_{2}}\frac{d}{dt}J_{0_+,t}^{1-\xi_{1}}f\Big)(t)|_{t=0}}{\Gamma(\nu_{1}+1)}\;t^{\nu_{1}}\Bigg).
	\end{align*}
	Again using fundamental theorem of Calculus, we have
	\begin{align*}
		\Big({}^{2L}D_{0_+,t}^{\rho,(\nu_{1},\nu_2)}f\Big)(t)=&	\frac{d}{dt}\left(\frac{d}{dt}J_{0_+,t}^{\nu_{1}+\nu_{2}}\Big(J_{0_+,t}^{1-\xi_{1}}f-J_{0_+,t}^{1-\xi_{1}}f(t)|_{t=0}\Big)\right)(t)\\&-\frac{d}{dt}\Bigg(\frac{\Big(J_{0_+,t}^{\nu_{2}}\frac{d}{dt}J_{0_+,t}^{1-\xi_{1}}f\Big)(t)|_{t=0}}{\Gamma(\nu_{1}+1)}t^{\nu_{1}}\Bigg)\\=&
		\frac{d}{dt}\left(\frac{d}{dt}J_{0_+,t}^{\nu_{1}+\nu_{2}}J_{0_+,t}^{1-\xi_{1}}f\right)(t)-\frac{d}{dt}\left(\frac{d}{dt}J_{0_+,t}^{\nu_{1}+\nu_{2}}\big(J_{0_+,t}^{1-\xi_{1}}f(t)|_{t=0}\big)\right)(t)\\&-\frac{d}{dt}\Bigg(\frac{\Big(J_{0_+,t}^{\nu_{2}}\frac{d}{dt}J_{0_+,t}^{1-\xi_{1}}f\Big)(t)|_{t=0}}{\Gamma(\nu_{1}+1)}\;t^{\nu_{1}}\Bigg)\\=&
		\frac{d}{dt}\left(J_{0_+,t}^{1-\rho}f\right)(t)-\frac{d}{dt}\Bigg(\frac{\big(J_{0_+,t}^{1-\xi_{1}}f\big)(t)|_{t=0}}{\Gamma(\nu_{1}+\nu_{2}+1)}\;t^{\nu_{1}+\nu_{2}}\Bigg)\\&-\frac{d}{dt}\Bigg(\frac{\Big(J_{0_+,t}^{\nu_{2}}\frac{d}{dt}J_{0_+,t}^{1-\xi_{1}}f\Big)(t)|_{t=0}}{\Gamma(\nu_{1}+1)}\;t^{\nu_{1}}\Bigg),
	\end{align*}
	which implies
	\begin{align*}	
		\Big({}^{2L}D_{0_+,t}^{\rho,(\nu_{1},\nu_2)}f\Big)(t)=&
		{}^{RL}D_{0_+}^{\rho}\Bigg(f(.)-\frac{\big(J_{0_+,t}^{1-\xi_{1}}f\big)(t)|_{t=0}}{\Gamma(\rho+\nu_{1}+\nu_{2}-1)}\;t^{\rho+\nu_{1}+\nu_{2}-2}\nonumber\\&-\frac{\Big(J_{0_+,t}^{\nu_{2}}\frac{d}{dt}J_{0_+,t}^{1-\xi_{1}}f\Big)(t)|_{t=0}}{\Gamma(\rho+\nu_{1})}\;t^{\rho+\nu_{1}-1}\Bigg)(t).	
	\end{align*}
	The proof of this theorem is complete.
\end{proof}
\noindent\textbf{Fundamental Theorems of FC for the 2nd LFD:} The first and second Fundamental Theorems of FC for the 2nd LFD are discussed in \cite{luchko}. The second fundamental theorem of 2nd LFD can obtain by applying RLFI in formula \ref{ass}  
	\begin{align*}	
		\Big(J_{0_+,t}^{\rho}{}^{2L}D_{0_+,t}^{\rho,(\nu_{1},\nu_2)}f\Big)(t)=&
		\Bigg(f(.)-\frac{\big(J_{0_+,t}^{1-\xi_{1}}f\big)(t)|_{t=0}}{\Gamma(\rho+\nu_{1}+\nu_{2}-1)}\;t^{\rho+\nu_{1}+\nu_{2}-2}\nonumber\\&-\frac{\Big(J_{0_+,t}^{\nu_{2}}\frac{d}{dt}J_{0_+,t}^{1-\xi_{1}}f\Big)(t)|_{t=0}}{\Gamma(\rho+\nu_{1})}\;t^{\rho+\nu_{1}-1}\Bigg)(t).	
\end{align*}
\begin{Remark} We have the following remarks;\\
	$\bullet$ For $\nu_{1}=\nu_{1}(1-\rho)$ and $\nu_{2}=1$, the $2$nd LFD becomes the HFD, i.e., 
	\begin{align*}{}^{2L}D_{0_+,t}^{\rho,(\nu_{1}(1-\rho),1)}f(t)=&
		{}^{RL}D_{0_+}^{\rho}\Bigg(f(.)-\frac{\big(J_{0_+,t}^{2-\rho-\nu_{1}(1-\rho)-1}f\big)(t)|_{t=0}}{\Gamma(\rho+\nu_{1}(1-\rho)+1-1)}\;t^{\rho+\nu_{1}(1-\rho)+1-2}\\&-\frac{\Big(J_{0_+,t}^{1}\frac{d}{dt}J_{0_+,t}^{2-\rho-\nu_{1}(1-\rho)-1}f\Big)(t)|_{t=0}}{\Gamma(\rho+\nu_{1}(1-\rho))}\;t^{\rho+\nu_{1}(1-\rho)-1}\Bigg)(t)\\
		=&
		{}^{RL}D_{0_+}^{\rho}\Bigg(f(.)-\frac{\big(J_{0_+,t}^{1-\rho-\nu_{1}(1-\rho)}f\big)(t)|_{t=0}}{\Gamma(\rho+\nu_{1}(1-\rho))}\;t^{\rho+\nu_{1}(1-\rho)-1}\\&-\frac{\Big(J_{0_+,t}^{1-\rho-\nu_{1}(1-\rho)}f\Big)(t)|_{t=0}}{\Gamma(\rho+\nu_{1}(1-\rho))}\;t^{\rho+\nu_{1}(1-\rho)-1}\Bigg)(t).
	\end{align*}
	Notice that if the one of the conditions, $\nu_{2}<1,\; 1<\rho+\nu_{1}+\nu_{2}$ (page 13, see \cite{luchko}) does not hold, then kernel of the $2$nd LFD becomes one-dimensional and one of the above conditions is equal to zero. Hence, we obtain the following result  \cite{Kamocki} 
	\begin{align*}
		{}^{2L}D_{0_+,t}^{\rho,(\nu_{1}(1-\rho),1)}f(t)=&	{}^{RL}D_{0_+}^{\rho}\Bigg(f(.)-\frac{\big(J_{0_+,t}^{1-\rho-\nu_{1}(1-\rho)}f\big)(t)|_{t=0}}{\Gamma(\rho+\nu_{1}(1-\rho))}\;t^{\rho+\nu_{1}(1-\rho)-1}\Bigg)(t)\\=&:{}^{H}D_{0_+,t}^{\rho,\nu_{1}}f(t).
	\end{align*}
	$\bullet$ For $\nu_{1} =0$ and $\nu_{2}=1$, the $2$nd LFD becomes the RLFD from \eqref{ass}, we have
	\begin{align*}{}^{2L}D_{0_+,t}^{\rho,(0,1)}f(t)=&
		{}^{RL}D_{0_+}^{\rho}\Bigg(f(.)-\frac{\big(J_{0_+,t}^{2-\rho-0-1}f\big)(t)|_{t=0}}{\Gamma(\rho+0+1-1)}\;t^{\rho+0+1-2}\\&-\frac{\Big(J_{0_+,t}^{1}\frac{d}{dt}J_{0_+,t}^{2-\rho-0-1}f\Big)(t)|_{t=0}}{\Gamma(\rho+0)}\;t^{\rho+0-1}\Bigg)(t)\\
		=&
		{}^{RL}D_{0_+}^{\rho}\Bigg(f(.)-\frac{\big(J_{0_+,t}^{1-\rho}f\big)(t)|_{t=0}}{\Gamma(\rho)}\;t^{\rho-1}-\frac{\Big(J_{0_+,t}^{1-\rho}f\Big)(t)|_{t=0}}{\Gamma(\rho)}\;t^{\rho-1}\Bigg)(t),
	\end{align*}
	Since ${}^{RL}D_{0_+}^{\rho}t^{\rho-1}=0$, then we have
	\begin{align*}{}^{2L}D_{0_+,t}^{\rho,(0,1)}f(t)=:&
		{}^{RL}D_{a_+,t}^{\rho}f(.)(t).
	\end{align*}
	$\bullet$ For $\nu_{1} =1-\rho$ and $\nu_{2}=0$, the $2$nd LFD reduces to the CFD, i.e.,
	\begin{align*}{}^{2L}D_{0_+,t}^{\rho,(1-\rho,0)}f(t)=&
		{}^{RL}D_{0_+}^{\rho}\Bigg(f(.)-\frac{\big(J_{0_+,t}^{2-\rho-(1-\rho)-0}f\big)(t)|_{t=0}}{\Gamma(\rho+(1-\rho)+0-1)}\;t^{\rho(1-\rho)+0-2}\\&-\frac{\Big(J_{0_+,t}^{0}\frac{d}{dt}J_{0_+,t}^{2-\rho-(1-\rho)-1}f\Big)(t)|_{t=0}}{\Gamma(\rho+(1-\rho))}\;t^{\rho+(1-\rho)-1}\Bigg)(t)\\=&
		{}^{RL}D_{0_+}^{\rho}\Bigg(f(.)-\frac{\big(J_{0_+,t}^{1}f\big)(t)|_{t=0}}{\Gamma(1)}-\frac{\Big(\frac{d}{dt}J_{0_+,t}^{1}f\Big)(t)|_{t=0}}{\Gamma(1)}\Bigg)(t),	\end{align*}
	Since, the term $\frac{\big(J_{0_+,t}^{1}f\big)(t)|_{t=0}}{\Gamma(1)},$ is equal to zero, we have
	\begin{align*}
		{}^{2L}D_{0_+,t}^{\rho,(1-\rho,0)}f(t)=&
		{}^{RL}D_{0_+}^{\rho}\Big(f(.)-f(t)|_{t=0}\Big)(t)=: {}^{C}D_{a_+,t}^{\rho}f(t).
	\end{align*}
\end{Remark}
\begin{Remark}
	The basic $2$nd LFD becomes the RLFD for the functions belonging to t the space \cite{luchko} $$W_{2L}^{0}([0,1])=\left\{f\in W^0:\frac{d}{dt}J_{0_+,t}^{\nu_1}f=J_{0_+,t}^{\nu_{1}}\frac{df}{dt}\;\; \mbox{and}\;\;\frac{d}{dt}J_{0_+,t}^{\nu_{1}+\nu_{2}}f=J_{0_+,t}^{\nu_{1}+\nu_{2}}\frac{df}{dt} \right\},$$
\end{Remark}
where $W^0=J_{0_+,t}^{\nu}(L^{1}(0,1)).$
\section{Alternate definition of the nth level fractional derivative}
In this section, we are going to present the new way of expressing the nth LFD of order $\rho,\;0<\rho\leq1$ and type $(\nu_{1},\nu_{2},...,\nu_{n}).$ We will prove that it can be defined as a generalized RLFD of order $\rho$.
\begin{theorem}\label{nthfunc}
	Let $0<\rho\leq1$, $0\leq\nu_{k}$, $\rho+r_{k}\leq k$, where $r_{k}$ is given in \eqref{rn est}, $\xi_{1}=\rho+\displaystyle\sum_{k=1}^{n}\nu_{k}-(n-1)$,\;  $\xi_{i}=1-\nu_{n-i+2}$$,\;i=2,3,...,n$ and $f\in L^{1}([0,1],\mathbb{R}^{n})$. The function $f$ has the nth  LFD of order $\rho$ and type $(\nu_1, \nu_2,...,\nu_n)$  iff $f\in AC^{\xi_{1}}([0,1])$\; \& \; $f_{k}\in AC^{\xi_{k+1}}([0,1])\;\; k=1,2,..,n$, we have
	$$\Big({}^{nL}D_{0_+,t}^{\rho,(\nu_{1},\nu_2,...,\nu_{n})}f\Big)(t)=\left(J_{0_+,t}^{\nu_1}\psi\right)(t),$$
	where
	\begin{eqnarray}\label{cdn 5}
		\left.
		\begin{aligned}
			\left({}^{RL}D_{0_+,t}^{\xi_{1}}f\right)(t)=&f_{1}(t),\quad\left(J_{0_+,t}^{1-\xi_{1}}f\right)(t)|_{t=0}=C_1,\\
			\left({}^{RL}D_{0_+,t}^{\xi_{k}}f_{k-1}\right)(t)=&f_{k}(t),\;\left(J_{0_+,t}^{1-\xi_{k}}f_{k-1}\right)(t)|_{t=0}=C_k,\; k=2,..,n-1,\\
			\left({}^{RL}D_{0_+,t}^{\xi_{n}}f_{n-1}\right)(t)=&\psi(t),\quad
			\left(J_{0_+,t}^{1-\xi_{n}}f_{n}\right)(t)|_{t=0}=C_{n}.
		\end{aligned}
		\right\}
	\end{eqnarray}
	Furthermore, we have
	\begin{eqnarray}
	\Big({}^{nL}D_{0_+,t}^{\rho,(\nu_{1},\nu_2,...,\nu_{n})}f\Big)(t)	=	{}^{RL}D_{0_+}^{\rho}\Bigg(f(.)-\sum_{k=1}^{n}\frac{\Big(\displaystyle\prod_{j=k+2}^{n}\Big(\frac{d}{dt}J_{0_+,t}^{\nu_{j}}\Big)\frac{d}{dt}J_{0_+,t}^{1-\xi_{1}}f\Big)(t)|_{t=0}}{\Gamma(\rho+r_{k}-(k-1))}\;t^{\rho+r_{k}-k}\Bigg)(t).\label{nthf}
	\end{eqnarray}
\end{theorem}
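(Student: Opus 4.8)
The plan is to prove the two assertions separately, in each case lifting the $n=2$ argument of Theorem~\ref{2nd-con} to general $n$ by iterating over the level. For the characterization --- the ``iff'' together with ${}^{nL}D^{\rho,(\nu_1,\dots,\nu_n)}_{0_+,t}f=(J^{\nu_1}_{0_+,t}\psi)(t)$ --- I would begin from the factored form of Remark~\ref{rmknth},
$${}^{nL}D^{\rho,(\nu_1,\dots,\nu_n)}_{0_+,t}f=\big(J^{\nu_1}_{0_+,t}\,{}^{RL}D^{\xi_n}_{0_+,t}\cdots{}^{RL}D^{\xi_1}_{0_+,t}f\big)(t),$$
and read the chain \eqref{cdn 5} as successive applications of Theorem~\ref{rei-theo}. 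The membership $f\in AC^{\xi_1}([0,1])$ is exactly what makes ${}^{RL}D^{\xi_1}_{0_+,t}f=f_1$ exist with $(J^{1-\xi_1}_{0_+,t}f)(0)=C_1$; then $f_1\in AC^{\xi_2}([0,1])$ yields ${}^{RL}D^{\xi_2}_{0_+,t}f_1=f_2$, and so on up to ${}^{RL}D^{\xi_n}_{0_+,t}f_{n-1}=\psi$. Substituting this chain into the factored form collapses the composed Riemann--Liouville derivatives onto $\psi$ and leaves the single outer integral $J^{\nu_1}_{0_+,t}$; the converse is the same steps read backwards. This half is routine once Theorem~\ref{rei-theo} is applied level by level.

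For the representation \eqref{nthf} I would iterate the three moves already used to obtain \eqref{ass}: (i) $\tfrac{d}{dt}J^{1}_{0_+,t}=\mathrm{Id}$, to manufacture a leading $\tfrac{d}{dt}J^{1}_{0_+,t}$; (ii) the semigroup law (Lemma~\ref{semigrp}) to merge neighbouring integrals; and (iii) the fundamental theorem of calculus $J^{1}_{0_+,t}\tfrac{d}{dt}g=g-g(0)$ to detach one boundary value. Starting from ${}^{nL}D^{\rho,(\nu_1,\dots,\nu_n)}_{0_+,t}f=J^{\nu_1}_{0_+,t}\tfrac{d}{dt}J^{\nu_2}_{0_+,t}\tfrac{d}{dt}\cdots J^{\nu_n}_{0_+,t}\tfrac{d}{dt}J^{1-\xi_1}_{0_+,t}f$, one ``promotion'' moves the accumulated integral to the left of the next derivative at the cost of exactly one boundary term. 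I would prove by induction on $m$ (for $0\le m\le n-1$) the invariant
$$\Big({}^{nL}D^{\rho,(\nu_1,\dots,\nu_n)}_{0_+,t}f\Big)(t)=\Big(\tfrac{d^{\,m}}{dt^{m}}J^{r_{m+1}}_{0_+,t}\tfrac{d}{dt}\,h_{m+1}\Big)(t)-\sum_{i=1}^{m}\tfrac{d^{\,i}}{dt^{i}}\!\left(\tfrac{h_i(0)}{\Gamma(r_i+1)}\,t^{r_i}\right),$$
where $r_k$ is as in \eqref{rn est} and $h_k=\big(\prod_{j=k+1}^{n}(J^{\nu_j}_{0_+,t}\tfrac{d}{dt})\big)J^{1-\xi_1}_{0_+,t}f$ is the as-yet-untouched tail. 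Taking $m=n-1$ and performing one last promotion empties the tail: the clean part becomes $\tfrac{d^{\,n}}{dt^{n}}J^{n-\rho}_{0_+,t}f=\tfrac{d}{dt}J^{1-\rho}_{0_+,t}f={}^{RL}D^{\rho}_{0_+,t}f$ (collapsing $\tfrac{d^{\,n-1}}{dt^{n-1}}J^{n-1}_{0_+,t}=\mathrm{Id}$), while each detached term is a constant multiple of a pure power of $t$, the constant $h_i(0)$ being the boundary value recorded in the numerator of \eqref{nthf}.

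The last step refactors the entire right-hand side as one Riemann--Liouville derivative. Using $\tfrac{d^{\,i}}{dt^{i}}t^{r_i}=\tfrac{\Gamma(r_i+1)}{\Gamma(r_i-i+1)}t^{r_i-i}$ and the power rule \eqref{deri}, one verifies that ${}^{RL}D^{\rho}_{0_+,t}(t^{\rho+r_i-i})$ reproduces $t^{r_i-i}$ with precisely the coefficient needed to pull each boundary term back inside ${}^{RL}D^{\rho}_{0_+,t}(\cdot)$ carrying denominator $\Gamma(\rho+r_i-(i-1))$, which is \eqref{nthf}. I expect the real difficulty to be combinatorial rather than conceptual: keeping an exact count of how many outer derivatives $\tfrac{d}{dt}$ act on each detached boundary term, ensuring that every index range and every Gamma factor lands on the form printed in \eqref{nthf}, and --- on the analytic side --- confirming that each use of the fundamental theorem of calculus is licensed by the absolute continuity built into $W_{nL}^{1}([0,1])$ (equivalently, the $AC^{\xi_k}$ memberships in \eqref{cdn 5}), since these are exactly what guarantee that the intermediate functions and their values at $t=0$ are meaningful.
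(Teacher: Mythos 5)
Your proposal is correct and follows essentially the same route as the paper: the ``iff'' part is obtained by applying Theorem~\ref{rei-theo} level by level to the factored form of Remark~\ref{rmknth}, and the representation \eqref{nthf} by iterating exactly the paper's three moves (inserting $\frac{d}{dt}J^{1}_{0_+,t}$, the semigroup law of Lemma~\ref{semigrp}, and the fundamental theorem of calculus to detach one boundary value per step) before pulling the resulting power functions back inside ${}^{RL}D^{\rho}_{0_+}$ via \eqref{deri}. Your explicit induction invariant is just a tidier bookkeeping of the same computation the paper carries out term by term.
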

\begin{proof}
	Let us assume that $f$ has the nth LFD of order $\rho$ and types $(\nu_1, \nu_2,...,\nu_{n})$, then $J_{0_+,t}^{1-\xi_{1}}f\in AC([0,1])$ \&
	$J_{0_+,t}^{1-\xi_{k+1}}f_{k}\in AC([0,1]),\;\; k=1,2,..,n$  so there exists RLFDs of order $\xi_{k}$ of the functions $f$\; \&\; $f_{k-1}, \;\; k=1,2,..,n.$ Consequently, we have
	\begin{align}
		\Big({}^{nL}D_{0_+,t}^{\rho,(\nu_{1},\nu_2,...,\nu_{n})}f\Big)(t)=&\left(J_{0_+,t}^{\nu_{1}}\displaystyle\prod_{k=1}^{n}\;\big({}^{RL}D_{0_+,t}^{\xi_k}\big)f\right)(t)\\=&\left(J_{0_+,t}^{\nu_{1}}\displaystyle\prod_{k=2}^{n}\;\big({}^{RL}D_{0_+,t}^{\xi_k}\big){}^{RL}D_{0_+,t}^{\xi_1}f\right)(t)\\=&\left(J_{0_+,t}^{\nu_{1}}{}^{RL}D_{0_+,t}^{\xi_n}\displaystyle\prod_{k=2}^{n-1}\;\big({}^{RL}D_{0_+,t}^{\xi_k}\big)f_{k-1}\right)(t)\\=&\left(J_{0_+,t}^{\nu_{1}}{}^{RL}D_{0_+,t}^{\xi_n}f_{n-1}\right)(t)\\=&\left(J_{0_+,t}^{\nu_1}\psi\right)(t).\end{align}
	Conversely, for $f\in AC^{\xi_{1}}([0,1])$ \& $f_{k}\in AC^{\xi_{k+1}}([0,1])\;\; k=1,2,..,n$, we have
	$$\Big({}^{nL}D_{0_+,t}^{\rho,(\nu_{1},\nu_2,...,\nu_{n})}f\Big)(t)=\left(J_{0_+,t}^{\nu_1}\phi\right)(t).$$
	Due to Theorem \ref{rei-theo} and conditions \eqref{cdn 5}, we have
	\begin{align*}
		\left({}^{nL}D_{0_+,t}^{\rho,(\nu_{1},\nu_2,...,\nu_{n})}f\right)(t)=&\left(J_{0_+,t}^{\nu_{1}}{}^{RL}D_{0_+,t}^{\xi_{n}}f_{n-1}\right)(t)\\=&\left(J_{0_+,t}^{\nu_{1}}\;{}^{RL}D_{0_+,t}^{\xi_{n}}\displaystyle\prod_{k=2}^{n-1}\big(\;{}^{RL}D_{0_+,t}^{\xi_{k}}\big)f_{k-1}\right)(t)\\=&\left(J_{0_+,t}^{\nu_{1}}\;{}^{RL}D_{0_+,t}^{\xi_{n}}\displaystyle\prod_{k=2}^{n}\big(\;{}^{RL}D_{0_+,t}^{\xi_{k}}\big){}^{RL}D_{0_+,t}^{\xi_1}f\right)(t)\\=&\left(J_{0_+,t}^{\nu_{1}}\;\displaystyle\prod_{k=1}^{n}\big(\;{}^{RL}D_{0_+,t}^{\xi_{k}}\big)f\right)(t),\end{align*}
	which implies that there exists the nth LFD of order $\rho$ and type $(\nu_1, \nu_2,...,\nu_{n})$ of $f$.
	\smallskip 
	
	\noindent
	The nth LFD exists if and only if  $f\in AC^{\xi_{1}}([0,1])$ \& $f_{k}\in AC^{\xi_{k+1}}([0,1])\;\; k=1,2,..,n$.
	Since, $J_{0_+,t}^{1-\xi_{1}}f\in AC([0,1])$ \&
	$J_{0_+,t}^{1-\xi_{k+1}}f_{k}\in AC([0,1]),$  then by virtue of Remark \ref{rmknth} and Lemma \ref{semigrp}, we have 
	\begin{align*}
		\Big({}^{nL}D_{0_+,t}^{\rho,(\nu_{1},\nu_2,...,\nu_{n})}f\Big)(t)&=\left(J_{0_+,t}^{\nu_{1}}\displaystyle\prod_{k=1}^{n}\;\big({}^{RL}D_{0_+,t}^{\xi_k}\big)f\right)(t)\\&=\left(J_{0_+,t}^{\nu_{1}}\displaystyle\prod_{k=2}^{n}\Big(\frac{d}{dt}J_{0_+,t}^{1-\xi_{k}}\Big)\frac{d}{dt}J_{0_+,t}^{1-\xi_{1}}f\right)(t)\\&= \left(J_{0_+,t}^{\nu_{1}}\displaystyle\prod_{k=2}^{n}\Big(\frac{d}{dt}J_{0_+,t}^{\nu_{k}}\Big)\frac{d}{dt}J_{0_+,t}^{1-\xi_{1}}f\right)(t)\\&
		=	\left(\frac{d}{dt}J_{0_+,t}^{1}J_{0_+,t}^{\nu_{1}}\displaystyle\prod_{k=2}^{n}\Big(\frac{d}{dt}J_{0_+,t}^{\nu_{j}}\Big)\frac{d}{dt}J_{0_+,t}^{1-\xi_{1}}f\right)(t)
		\\&
		=	\left(\frac{d}{dt}J_{0_+,t}^{\nu_{1}}J_{0_+,t}^{1}\frac{d}{dt}J_{0_+,t}^{\nu_{2}}\displaystyle\prod_{k=3}^{n}\Big(\frac{d}{dt}J_{0_+,t}^{\nu_{k}}\Big)\frac{d}{dt}J_{0_+,t}^{1-\xi_{1}}f\right)(t).
	\end{align*}
	Using fundamental theorem of Calculus, we have
	\begin{align*}
		\Big({}^{nL}D_{0_+,t}^{\rho,(\nu_{1},\nu_2,...,\nu_{n})}f\Big)(t)=&	\Bigg[\frac{d}{dt}J_{0_+,t}^{\nu_{1}}\Bigg(J_{0_+,t}^{\nu_{2}}\displaystyle\prod_{k=3}^{n}\Big(\frac{d}{dt}J_{0_+,t}^{\nu_{k}}\Big)\frac{d}{dt}J_{0_+,t}^{1-\xi_{1}}f\\&-J_{0_+,t}^{\nu_{2}}\displaystyle\prod_{k=3}^{n-1}\Big(\frac{d}{dt}J_{0_+,t}^{\nu_{k}}\Big)\frac{d}{dt}J_{0_+,t}^{1-\xi_{1}}f(t)|_{t=0}\Bigg)\Bigg](t)\\=&	\frac{d}{dt}\left(J_{0_+,t}^{\nu_{1}}J_{0_+,t}^{\nu_{2}}\displaystyle\prod_{k=3}^{n}\Big(\frac{d}{dt}J_{0_+,t}^{\nu_{k}}\Big)\frac{d}{dt}J_{0_+,t}^{1-\xi_{1}}f\right)(t)\\&-\frac{d}{dt}J_{0_+,t}^{\nu_{1}}\left(J_{0_+,t}^{\nu_{2}}\displaystyle\prod_{k=3}^{n}\Big(\frac{d}{dt}J_{0_+,t}^{\nu_{k}}\Big)\frac{d}{dt}J_{0_+,t}^{1-\xi_{1}}f(t)|_{t=0}\right)(t).\end{align*} This implies that 
	\begin{align*}\Big({}^{nL}D_{0_+,t}^{\rho,(\nu_{1},\nu_2,...,\nu_{n})}f\Big)(t)=&	\frac{d}{dt}\left(\frac{d}{dt}J_{0_+,t}^{1}J_{0_+,t}^{\nu_{1}+\nu_{2}}\frac{d}{dt}J_{0_+,t}^{\nu_{3}}\displaystyle\prod_{k=4}^{n}\Big(\frac{d}{dt}J_{0_+,t}^{\nu_{k}}\Big)\frac{d}{dt}J_{0_+,t}^{1-\xi_{1}}f\right)(t)\\&-\frac{d}{dt}\Bigg(\frac{\Big(J_{0_+,t}^{\nu_{2}}\displaystyle\prod_{k=3}^{n}\Big(\frac{d}{dt}J_{0_+,t}^{\nu_{k}}\Big)\frac{d}{dt}J_{0_+,t}^{1-\xi_{1}}f\Big)(t)|_{t=0}}{\Gamma(\nu_{1}+1)}\;t^{\nu_{1}}\Bigg)\\=&	\frac{d}{dt}\left(\frac{d}{dt}J_{0_+,t}^{\nu_{1}+\nu_{2}}J_{0_+,t}^{1}\frac{d}{dt}J_{0_+,t}^{\nu_{3}}\displaystyle\prod_{k=4}^{n}\Big(\frac{d}{dt}J_{0_+,t}^{\nu_{k}}\Big)\frac{d}{dt}J_{0_+,t}^{1-\xi_{1}}f\right)(t)\\&-\frac{d}{dt}\Bigg(\frac{\Big(J_{0_+,t}^{\nu_{2}}\displaystyle\prod_{k=4}^{n}\Big(\frac{d}{dt}J_{0_+,t}^{\nu_{k}}\Big)\frac{d}{dt}J_{0_+,t}^{1-\xi_{1}}f\Big)(t)|_{t=0}}{\Gamma(\nu_{1}+1)}\;t^{\nu_{1}}\Bigg).
	\end{align*}
	Again fundamental theorem of Calculus, we have
	\begin{align*}
		\Big({}^{nL}D_{0_+,t}^{\rho,(\nu_{1},\nu_2,...,\nu_{n})}f\Big)(t)=&\frac{d}{dt}\Bigg[\frac{d}{dt}J_{0_+,t}^{\nu_{1}+\nu_{2}}\bigg(J_{0_+,t}^{\nu_{3}}\displaystyle\prod_{k=4}^{n}\Big(\frac{d}{dt}J_{0_+,t}^{\nu_{k}}\Big)\frac{d}{dt}J_{0_+,t}^{1-\xi_{1}}f\\&-J_{0_+,t}^{\nu_{3}}\displaystyle\prod_{k=4}^{n}\Big(\frac{d}{dt}J_{0_+,t}^{\nu_{k}}\Big)\frac{d}{dt}J_{0_+,t}^{1-\xi_{1}}f(t)|_{t=0}\bigg)\Bigg](t)\\&-\frac{d}{dt}\Bigg(\frac{\Big(J_{0_+,t}^{\nu_{2}}\displaystyle\prod_{k=3}^{n}\Big(\frac{d}{dt}J_{0_+,t}^{\nu_{k}}\Big)\frac{d}{dt}J_{0_+,t}^{1-\xi_{1}}f\Big)(t)|_{t=0}}{\Gamma(\nu_{1}+1)}\;t^{\nu_{1}}\Bigg),\\=&\frac{d}{dt}\left(\frac{d}{dt}J_{0_+,t}^{\nu_{1}+\nu_{2}}J_{0_+,t}^{\nu_{3}}\displaystyle\prod_{k=4}^{n}\Big(\frac{d}{dt}J_{0_+,t}^{\nu_{k}}\Big)\frac{d}{dt}J_{0_+,t}^{1-\xi_{1}}f\right)(t)\\&-\frac{d}{dt}\Bigg[\frac{d}{dt}J_{0_+,t}^{\nu_{1}+\nu_{2}}\left(J_{0_+,t}^{\nu_{3}}\displaystyle\prod_{k=4}^{n}\Big(\frac{d}{dt}J_{0_+,t}^{\nu_{k}}\Big)\frac{d}{dt}J_{0_+,t}^{1-\xi_{1}}f(t)|_{t=0}\right)\Bigg](t)\\&-\frac{d}{dt}\Bigg(\frac{\Big(J_{0_+,t}^{\nu_{2}}\displaystyle\prod_{k=3}^{n}\Big(\frac{d}{dt}J_{0_+,t}^{\nu_{k}}\Big)\frac{d}{dt}J_{0_+,t}^{1-\xi_{1}}f\Big)(t)|_{t=0}}{\Gamma(\nu_{1}+1)}\;t^{\nu_{1}}\Bigg),\end{align*}which implies
	\begin{align*}
		\Big({}^{nL}D_{0_+,t}^{\rho,(\nu_{1},\nu_2,...,\nu_{n})}f\Big)(t)=&\frac{d}{dt}\left(\frac{d^2}{dt^{2}}J_{0_+,t}^{1}J_{0_+,t}^{\nu_{1}+\nu_{2}+\nu_{3}}\frac{d}{dt}J_{0_+,t}^{\nu_{4}}\displaystyle\prod_{k=5}^{n}\Big(\frac{d}{dt}J_{0_+,t}^{\nu_{k}}\Big)\frac{d}{dt}J_{0_+,t}^{1-\xi_{1}}f\right)(t)\\&-\frac{d}{dt}\Bigg(\frac{\Big(J_{0_+,t}^{\nu_{3}}\displaystyle\prod_{k=4}^{n}\Big(\frac{d}{dt}J_{0_+,t}^{\nu_{k}}\Big)\frac{d}{dt}J_{0_+,t}^{1-\xi_{1}}f\Big)(t)|_{t=0}}{\Gamma(\nu_{1}+\nu_{2}+1)}\;t^{\nu_{1}+\nu_{2}}\Bigg)	\\&-\frac{d}{dt}\Bigg(\frac{\Big(J_{0_+,t}^{\nu_{2}}\displaystyle\prod_{k=3}^{n}\Big(\frac{d}{dt}J_{0_+,t}^{\nu_{k}}\Big)\frac{d}{dt}J_{0_+,t}^{1-\xi_{1}}f\Big)(t)|_{t=0}}{\Gamma(\nu_{1}+1)}\;t^{\nu_{1}}\Bigg),
	\end{align*}
	\begin{align*}
		\Big({}^{nL}D_{0_+,t}^{\rho,(\nu_{1},\nu_2,...,\nu_{n})}f\Big)(t)=&\frac{d}{dt}\left(\frac{d^2}{dt^{2}}J_{0_+,t}^{\nu_{1}+\nu_{2}+\nu_{3}}J_{0_+,t}^{1}\frac{d}{dt}J_{0_+,t}^{\nu_{4}}\displaystyle\prod_{k=5}^{n}\Big(\frac{d}{dt}J_{0_+,t}^{\nu_{k}}\Big)\frac{d}{dt}J_{0_+,t}^{1-\xi_{1}}f\right)(t)\\&-\frac{d}{dt}\Bigg(\frac{\Big(J_{0_+,t}^{\nu_{3}}\displaystyle\prod_{k=4}^{n}\Big(\frac{d}{dt}J_{0_+,t}^{\nu_{k}}\Big)\frac{d}{dt}J_{0_+,t}^{1-\xi_{1}}f\Big)(t)|_{t=0}}{\Gamma(\nu_{1}+\nu_{2}+1)}\;t^{\nu_{1}+\nu_{2}}\Bigg)\\&-\frac{d}{dt}\Bigg(\frac{\Big(J_{0_+,t}^{\nu_{2}}\displaystyle\prod_{k=3}^{n}\Big(\frac{d}{dt}J_{0_+,t}^{\nu_{k}}\Big)\frac{d}{dt}J_{0_+,t}^{1-\xi_{1}}f\Big)(t)|_{t=0}}{\Gamma(\nu_{1}+1)}\;t^{\nu_{1}}\Bigg).	\end{align*}
	Similarly, using term by term fundamental theorem of Calculus, we can get
\begin{align*}
		\left({}^{nL}D_{0_+,t}^{\rho,(\nu_{1},\nu_{2},...,\nu_{n})}f\right)(t)=&
		{}^{RL}D_{0_+}^{\rho}\Bigg(f(.)-\sum_{k=1}^{n}\frac{J_{0_+,t}^{\nu_{k+1}}\Big(\displaystyle\prod_{j=k+2}^{n}\Big(\frac{d}{dt}J_{0_+,t}^{\nu_{j}}\Big)\frac{d}{dt}J_{0_+,t}^{1-\xi_{1}}f\Big)(t)|_{t=0}}{\Gamma(\rho+r_{k}-(k-1))}\;t^{\rho+r_{k}-k}\Bigg)(t).
	\end{align*}
	The proof of this theorem is complete.
\end{proof}
\noindent\textbf{Fundamental Theorems of FC for the nth LFD:} The first and second Fundamental Theorems of FC for the nth LFD are discussed in \cite{luchko1}. The second fundamental theorem of nth LFD can obtain by applying RLFI in formula \ref{nthf}  
\begin{align*}
		\left(J_{0_+,t}^{\rho}{}^{nL}D_{0_+,t}^{\rho,(\nu_{1},\nu_{2},...,\nu_{n})}f\right)(t)=&
		\Bigg(f(.)-\sum_{k=1}^{n}\frac{J_{0_+,t}^{\nu_{k+1}}\Big(\displaystyle\prod_{j=k+2}^{n}\Big(\frac{d}{dt}J_{0_+,t}^{\nu_{j}}\Big)\frac{d}{dt}J_{0_+,t}^{1-\xi_{1}}f\Big)(t)|_{t=0}}{\Gamma(\rho+r_{k}-(k-1))}\;t^{\rho+r_{k}-k}\Bigg)(t).
\end{align*}
\begin{Remark}
	For $n=2$, the nth LFD reduces to $2$nd LFD given by \eqref{ass} and the result becomes to Theorem \ref{2nd-con}. 
\end{Remark}
\begin{Remark}
	The basic nth LFD interpolate with the RLFD in the space \cite{luchko} $$W_{nL}^{0}([0,1])=\left\{f\in W^0:\frac{d}{dt}J_{0_+,t}^{r_k}f=J_{0_+,t}^{r_k}\frac{df}{dt},\;k=1,2,3,...,n\right\}.$$
\end{Remark}
\section{Applications of $2$nd Level Fractional Derivative}
In this section, we are going to discuss some application of $2$nd LFD, given by \eqref{ass}.
\begin{Remark}
	The new representation of $2$nd LFD given  by \eqref{ass} can be written as
	$$\left({}^{2L}D_{0_+,t}^{\rho,(\nu_{1},\nu_{2})}f\right)(t)$$\begin{eqnarray}\label{asim}
		&=&
		{}^{RL}D_{0_+}^{\rho}f(t)-\frac{\big(J_{0_+,t}^{1-\xi_{1}}f\big)(t)|_{t=0}}{\Gamma(\nu_{1}+\nu_{2}-1)} t^{\nu_{1}+\nu_{2}-2}-\frac{\Big(J_{0_+,t}^{\nu_{2}}\frac{d}{dt}J_{0_+,t}^{1-\xi_{1}}f\Big)(t)|_{t=0}}{\Gamma(\nu_{1})} t^{\nu_{1}-1}.	
	\end{eqnarray}
	For determining the $2$nd LFD of any function $f$, it is sufficient to calculate RLFD ${}^{RL}D_{0_+}^{\rho}f(t)$ and $J_{0_+,t}^{1-\rho}f(t)|_{t=0}$ by using the \eqref{asim}. Hence, it is easier to calculate $2$nd LFD by using \eqref{asim} instead of using  \eqref{2lfd}. 
\end{Remark}  
\begin{proposition}\cite{Samko-Kilbas}\label{12}
	Assume $f\in L^1([0,1])$. then we have
	\begin{equation*}
		f\in J_{0_+,t}^{\rho}(L^{1})\quad \Rightarrow\quad J_{0_+,t}^{1-\rho}f \in AC([0,1]), \quad \mbox{and} \quad J_{0_+,t}^{1-\rho}f(t)|_{t=0}=0.
	\end{equation*}
\end{proposition}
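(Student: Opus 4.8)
The plan is to unwind the membership $f\in J_{0_+,t}^{\rho}(L^{1})$ into an explicit representation of $f$, and then collapse $J_{0_+,t}^{1-\rho}f$ into an ordinary indefinite integral by means of the semigroup law, at which point both conclusions become textbook facts about Lebesgue integrals.

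First I would invoke the definition of the space $J_{0_+,t}^{\rho}(L^{1})$ recalled in Proposition \ref{22}: the hypothesis $f\in J_{0_+,t}^{\rho}(L^{1})$ means precisely that there exists some $\psi\in L^{1}([0,1])$ with $f=J_{0_+,t}^{\rho}\psi$. Next I would apply the semigroup property of the Riemann--Liouville integral (Lemma \ref{semigrp}) with the exponents $\rho_{1}=1-\rho$ and $\rho_{2}=\rho$, both positive since $0<\rho<1$, to compute
$$\left(J_{0_+,t}^{1-\rho}f\right)(t)=\left(J_{0_+,t}^{1-\rho}J_{0_+,t}^{\rho}\psi\right)(t)=\left(J_{0_+,t}^{1}\psi\right)(t)=\int_{0}^{t}\psi(\tau)\,d\tau.$$
This identity is the crux of the argument: it reduces the fractional integral $J_{0_+,t}^{1-\rho}f$ to the primitive $J_{0_+,t}^{1}\psi$ of an $L^{1}$ function.

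With this representation in hand, both assertions follow immediately. For the first, since $\psi\in L^{1}([0,1])$, its indefinite integral $t\mapsto\int_{0}^{t}\psi(\tau)\,d\tau$ is absolutely continuous on $[0,1]$, which is the standard measure-theoretic statement that indefinite Lebesgue integrals belong to $AC([0,1])$; hence $J_{0_+,t}^{1-\rho}f\in AC([0,1])$. For the second, evaluating the same expression at $t=0$ gives $\left(J_{0_+,t}^{1-\rho}f\right)(t)|_{t=0}=\int_{0}^{0}\psi(\tau)\,d\tau=0$. The only non-formal ingredient anywhere in the proof is the absolute continuity of indefinite integrals, so there is no genuine obstacle here; the entire content of the proposition is carried by the single application of the semigroup law in Lemma \ref{semigrp}, and care should be taken only to verify that the exponents $1-\rho$ and $\rho$ are strictly positive so that Lemma \ref{semigrp} applies.
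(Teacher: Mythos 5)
Your proof is correct. Note that the paper itself offers no proof of Proposition \ref{12} at all---it is quoted from Samko--Kilbas as a known result---so your argument is a self-contained reconstruction rather than a parallel to anything in the text; the route you take (unwinding $f=J_{0_+,t}^{\rho}\psi$ via the definition recalled in Proposition \ref{22}, applying the semigroup law with exponents $1-\rho$ and $\rho$, and reducing everything to the absolute continuity of the Lebesgue primitive $J_{0_+,t}^{1}\psi$) is exactly the standard argument behind the cited result. One refinement is worth adding: Lemma \ref{semigrp} as stated only gives $J_{0_+,t}^{1-\rho}f = J_{0_+,t}^{1}\psi$ \emph{almost everywhere}, while your conclusions ($AC$ membership and the evaluation at $t=0$) concern a pointwise-defined representative. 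You can close this in either of two ways: invoke the usual convention that $J_{0_+,t}^{1-\rho}f\in AC([0,1])$ means it coincides a.e.\ with an absolutely continuous function, whose value at $0$ is then what the symbol $J_{0_+,t}^{1-\rho}f(t)|_{t=0}$ denotes; or observe that in this special case (total order $1-\rho+\rho=1$) the Fubini--Tonelli computation underlying the semigroup law converges absolutely at \emph{every} $t\in[0,1]$, since by Tonelli $\left(J_{0_+,t}^{1-\rho}J_{0_+,t}^{\rho}|\psi|\right)(t)=\left(J_{0_+,t}^{1}|\psi|\right)(t)=\int_{0}^{t}|\psi(\tau)|\,d\tau<\infty$, so that the identity $\left(J_{0_+,t}^{1-\rho}f\right)(t)=\int_{0}^{t}\psi(\tau)\,d\tau$ holds for every $t$ and not merely a.e. With either reading, both assertions follow exactly as you say, and the restriction $0<\rho<1$ that you flag (needed so both exponents fed to Lemma \ref{semigrp} are strictly positive) is the only hypothesis the argument consumes.
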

\begin{Remark}
	In \cite{luchko}, the Laplace transform of the $2$nd LFD \eqref{2lfd} is given by 
	\begin{eqnarray*}
		\mathcal{L}\Big\{\left({}^{2L}D_{0_+,t}^{\rho,(\nu_{1},\nu_{2})}f\right)(t);s\Big\}:= s^{\rho}\mathcal{L}\{f(t);s\}-s^{-\nu_{1}-\nu_{2}+1}\big(J_{0_+,t}^{1-\xi_{1}}f\big)(t)|_{t=0}-s^{-\nu_{1}}\Big(J_{0_+,t}^{\nu_{2}}\frac{d}{dt}J_{0_+,t}^{1-\xi_{1}}f\Big)(t)|_{t=0} .
	\end{eqnarray*}
	By using an alternate definition given in \eqref{2lfd}, we have the expression of Laplace transform 
	\begin{align*}	
		\mathcal{L}&\Bigg\{{}^{RL}D_{0_+}^{\rho}\Bigg(f(.)-\frac{\big(J_{0_+,t}^{1-\xi_{1}}f\big)(t)|_{t=0}}{\Gamma(\rho+\nu_{1}+\nu_{2}-1)}\;t^{\rho+\nu_{1}+\nu_{2}-2}-\frac{\Big(J_{0_+,t}^{\nu_{2}}\frac{d}{dt}J_{0_+,t}^{1-\xi_{1}}f\Big)(t)|_{t=0}}{\Gamma(\rho+\nu_{1})}\;t^{\rho+\nu_{1}-1}\Bigg)(t);s\Bigg\}\\=&s^{\rho}\mathcal{L}\{f(t);s\}-\big(J_{0_+,t}^{1-\rho}f\big)(t)|_{t=0}-s^{-\nu_{1}-\nu_{2}+1}\big(J_{0_+,t}^{1-\xi_{1}}f\big)(t)|_{t=0}-s^{-\nu_{1}}\Big(J_{0_+,t}^{\nu_{2}}\frac{d}{dt}J_{0_+,t}^{1-\xi_{1}}f\Big)(t)|_{t=0}  .	
	\end{align*}

	The new definition of $2$nd LFD given by  \eqref{ass} can be obtained by using the Laplace transform of the $2$nd LFD. Indeed, we have $\xi_{1},\;\xi_{2}>\rho$ for $AC^{\xi_{1}}([0,1]), AC^{\xi_2}([0,1]) \subset J_{0_+,t}^{\rho}(L^{1}([0,1]))$, see (\cite{id}, Theorem 28). Then we have $J_{0_+,t}^{1-\rho}f(t)|_{t=0}=0,$ so
	$$\mathcal{L}\{{}^{RL}D_{0_+}^{\rho}f(t); s\}= s^{\rho}\mathcal{L}\{f(t);s\}.$$
	
\end{Remark}
\begin{lemma}\label{11}
	If \;$0<\rho\leq1$,\; $0\leq \nu_1,\; 0\leq\nu_2,\;\rho+\nu_{1}\leq1,\; \rho+\nu_{1}+\nu_{2}\leq2$,\;  $f\in J_{0_+,t}^{\xi_{1}}(L^{1})$ with  $\xi_{1}=\rho+\nu_{1}+\nu_{2}-1$ and $\;  J_{0_+,t}^{1-\xi_{2}}f_{1}\in J_{0_+,t}^{\xi_{2}}(L^{1})$ with $\xi_{2}=1-\nu_{2}$, then we have
	\begin{equation*}
		\left({}^{2L}D_{0_+,t}^{\rho,(\nu_{1},\nu_{2})}f\right)(t)=
		{}^{RL}D_{0_+}^{\rho}f(t).	
	\end{equation*}
\end{lemma}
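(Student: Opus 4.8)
The plan is to read the result straight off the alternate representation \eqref{ass} (equivalently \eqref{asim}) and to show that, under the two membership hypotheses, the two polynomial correction terms there vanish. By linearity of the RLFD, \eqref{ass} gives
\[
\left({}^{2L}D_{0_+,t}^{\rho,(\nu_{1},\nu_2)}f\right)(t)={}^{RL}D_{0_+}^{\rho}f(t)-C_{3}\,{}^{RL}D_{0_+}^{\rho}t^{\rho+\nu_1+\nu_2-2}-C_{4}\,{}^{RL}D_{0_+}^{\rho}t^{\rho+\nu_1-1},
\]
so it suffices to prove that the constants $C_{3}$ and $C_{4}$ of \eqref{cdn 34} are both zero: each subtracted term carries a factor $C_{3}$ or $C_{4}$, so once these vanish only ${}^{RL}D_{0_+}^{\rho}f$ survives. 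Note that we do not rely on the Gamma-function coefficients produced by \eqref{deri} being zero (they are not, in general); the terms die because their constants do.

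First I would dispose of $C_{3}$. By \eqref{cdn 34}, $C_{3}$ is a constant multiple of $\big(J_{0_+,t}^{1-\xi_{1}}f\big)(t)|_{t=0}$. Since the hypothesis gives $f\in J_{0_+,t}^{\xi_{1}}(L^{1})$, Proposition \ref{12} applied with exponent $\xi_{1}$ yields $J_{0_+,t}^{1-\xi_{1}}f\in AC([0,1])$ together with $\big(J_{0_+,t}^{1-\xi_{1}}f\big)(t)|_{t=0}=0$; hence $C_{3}=0$.

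The harder step is $C_{4}$, whose numerator is $\Big(J_{0_+,t}^{\nu_{2}}\frac{d}{dt}J_{0_+,t}^{1-\xi_{1}}f\Big)(t)|_{t=0}$. The key observation is to rewrite this boundary quantity in terms of $f_{1}$: since $\frac{d}{dt}J_{0_+,t}^{1-\xi_{1}}f={}^{RL}D_{0_+,t}^{\xi_{1}}f=f_{1}$ and $\nu_{2}=1-\xi_{2}$, we have $J_{0_+,t}^{\nu_{2}}\frac{d}{dt}J_{0_+,t}^{1-\xi_{1}}f=J_{0_+,t}^{1-\xi_{2}}f_{1}$. Then the second membership hypothesis on $f_{1}$, combined with Proposition \ref{12} now applied with exponent $\xi_{2}$, forces $\big(J_{0_+,t}^{1-\xi_{2}}f_{1}\big)(t)|_{t=0}=0$, so $C_{4}=0$. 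Substituting $C_{3}=C_{4}=0$ into the displayed identity gives ${}^{2L}D_{0_+,t}^{\rho,(\nu_1,\nu_2)}f={}^{RL}D_{0_+}^{\rho}f$, as claimed.

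I expect the main obstacle to be precisely this $C_{4}$ step: one must correctly identify the iterated operator $J_{0_+,t}^{\nu_{2}}\frac{d}{dt}J_{0_+,t}^{1-\xi_{1}}$ with $J_{0_+,t}^{1-\xi_{2}}$ acting on $f_{1}$, and then match the exponent of the fractional integral in Proposition \ref{12} to $\xi_{2}$ so that the evaluation at $t=0$ genuinely vanishes. As a cross-check and alternative route, the same conclusion follows from the Laplace-transform computation recorded just above the statement: under these hypotheses both $\big(J_{0_+,t}^{1-\xi_{1}}f\big)(t)|_{t=0}$ and $\big(J_{0_+,t}^{\nu_{2}}\frac{d}{dt}J_{0_+,t}^{1-\xi_{1}}f\big)(t)|_{t=0}$ vanish, and also $\big(J_{0_+,t}^{1-\rho}f\big)(t)|_{t=0}=0$, so the transform of ${}^{2L}D_{0_+,t}^{\rho,(\nu_1,\nu_2)}f$ collapses to $s^{\rho}\mathcal{L}\{f(t);s\}=\mathcal{L}\{{}^{RL}D_{0_+}^{\rho}f(t);s\}$, and uniqueness of the Laplace transform closes the argument.
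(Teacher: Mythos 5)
Your proof is correct and follows essentially the same route as the paper's: the paper also starts from the representation \eqref{ass} (in its expanded form \eqref{asim}), makes the same key identification $J_{0_+,t}^{\nu_{2}}\frac{d}{dt}J_{0_+,t}^{1-\xi_{1}}f=J_{0_+,t}^{1-\xi_{2}}f_{1}$ with $f_{1}:=\frac{d}{dt}J_{0_+,t}^{1-\xi_{1}}f$, and then applies Proposition \ref{12} twice to make both boundary terms vanish. Your Laplace-transform argument is only a supplementary cross-check, which the paper records separately as a remark rather than in the proof itself.
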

\begin{proof}
	From Equation \eqref{asim}, we have
	\begin{align*}
		\left({}^{2L}D_{0_+,t}^{\rho,(\nu_{1},\nu_{2})}f\right)(t)=&
		{}^{RL}D_{0_+}^{\rho}f(t)-\frac{\big(J_{0_+,t}^{1-\xi_{1}}f\big)(t)|_{t=0}}{\Gamma(\nu_{1}+\nu_{2}-1)} t^{\nu_{1}+\nu_{2}-2}-\frac{\Big(J_{0_+,t}^{1-\xi_{2}}f_{1}\Big)(t)|_{t=0}}{\Gamma(\nu_{1})} t^{\nu_{1}-1},	
	\end{align*}
	where $f_{1}(t):=\frac{d}{dt}J_{0_+,t}^{1-\xi_{1}}f(t)$.
	
	\noindent By using the Proposition \ref{12}, we get the relation
	\begin{equation*}
		\left({}^{2L}D_{0_+,t}^{\rho,(\nu_{1},\nu_{2})}f\right)(t)=
		{}^{RL}D_{0_+}^{\rho}f(t).	
	\end{equation*}
\end{proof}
\begin{lemma}
	If \;$0<\rho\leq1$,\; $0\leq \nu_1,\; 0\leq\nu_2,\;\rho+\nu_{1}\leq1,\; \rho+\nu_{1}+\nu_{2}\leq2$,\;  $f\in J_{0_+,t}^{\xi_{1}}(L^{1})$ with  $\xi_{1}=\rho+\nu_{1}+\nu_{2}-1$ and $\;  f_{1}\in J_{0_+,t}^{\xi_{2}}(L^{1})$ with $\xi_{2}=1-\nu_{2}$. Then we have
	\begin{eqnarray}
		\left(J_{0_+,t}^{\rho}{}^{2L}D_{0_+,t}^{\rho,(\nu_{1},\nu_{2})}f\right)(t)&=&
		f(t),\label{1}\\	
		\left({}^{2L}D_{0_+,t}^{\rho,(\nu_{1},\nu_{2})}J_{0_+,t}^{\rho}f\right)(t)&=&
		f(t).\label{2}
	\end{eqnarray}
\end{lemma}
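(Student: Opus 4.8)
The plan is to deduce both identities from a single structural fact already in hand: under the stated hypotheses the 2nd LFD collapses to an ordinary Riemann--Liouville derivative. Concretely, Lemma~\ref{11} asserts that $\left({}^{2L}D_{0_+,t}^{\rho,(\nu_1,\nu_2)}f\right)(t)={}^{RL}D_{0_+}^{\rho}f(t)$ whenever the two boundary contributions in the alternate representation \eqref{asim} drop out, and the membership assumptions $f\in J_{0_+,t}^{\xi_1}(L^1)$ and $f_1\in J_{0_+,t}^{\xi_2}(L^1)$ are designed precisely to force this. Once the problem is reduced to the ordinary RLFD, the two claims become the classical second and first fundamental theorems of fractional calculus recorded in Proposition~\ref{22}.

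For \eqref{1} I would first invoke Lemma~\ref{11} to replace ${}^{2L}D_{0_+,t}^{\rho,(\nu_1,\nu_2)}f$ by ${}^{RL}D_{0_+}^{\rho}f$, so that $\left(J_{0_+,t}^{\rho}{}^{2L}D_{0_+,t}^{\rho,(\nu_1,\nu_2)}f\right)(t)=\left(J_{0_+,t}^{\rho}{}^{RL}D_{0_+}^{\rho}f\right)(t)$. The right-hand side equals $f(t)$ by Proposition~\ref{22}(2), provided $f\in J_{0_+,t}^{\rho}(L^1)$. This membership I would obtain from the hypothesis $f\in J_{0_+,t}^{\xi_1}(L^1)$ together with the semigroup law (Lemma~\ref{semigrp}): writing $J_{0_+,t}^{\xi_1}\psi=J_{0_+,t}^{\rho}\big(J_{0_+,t}^{\xi_1-\rho}\psi\big)$ exhibits $f$ as a fractional integral of order $\rho$ of an $L^1$ function, giving the inclusion $J_{0_+,t}^{\xi_1}(L^1)\subseteq J_{0_+,t}^{\rho}(L^1)$.

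For \eqref{2} I would set $h=J_{0_+,t}^{\rho}f$ and apply the alternate representation \eqref{ass}--\eqref{asim} to $h$. The substance is to check that the two initial-value terms vanish, i.e. $\big(J_{0_+,t}^{1-\xi_1}h\big)(t)|_{t=0}=0$ and $\big(J_{0_+,t}^{\nu_2}\tfrac{d}{dt}J_{0_+,t}^{1-\xi_1}h\big)(t)|_{t=0}=0$; granting this, \eqref{asim} reduces to $\left({}^{2L}D_{0_+,t}^{\rho,(\nu_1,\nu_2)}h\right)(t)={}^{RL}D_{0_+}^{\rho}J_{0_+,t}^{\rho}f(t)$, which is $f(t)$ by the first fundamental theorem, Proposition~\ref{22}(1). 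To kill the boundary terms I would use that $h\in J_{0_+,t}^{\rho}(L^1)$ and, more sharply, recast $J_{0_+,t}^{1-\xi_1}h=J_{0_+,t}^{1-\xi_1+\rho}f$ via Lemma~\ref{semigrp}, so that Proposition~\ref{12} (the statement that a genuine fractional integral has vanishing weighted trace at the origin) applies.

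The main obstacle, in both parts, is the regularity bookkeeping rather than any new identity. One must confirm that $f$ (resp. $J_{0_+,t}^{\rho}f$) really lies in the subspace where both the collapse of Lemma~\ref{11} and the RLFD fundamental theorems of Proposition~\ref{22} are simultaneously valid, equivalently that the two boundary terms in \eqref{asim} vanish. This is delicate because for orders below one the trace $\big(J_{0_+,t}^{\alpha}g\big)(t)|_{t=0}$ need not vanish for arbitrary $g\in L^1$ (for instance it is a nonzero constant when $g(\tau)=\tau^{-\alpha}$), so the vanishing is genuinely a consequence of the hypotheses $f\in J_{0_+,t}^{\xi_1}(L^1)$, $f_1\in J_{0_+,t}^{\xi_2}(L^1)$ and of the correct matching of the exponents $\xi_1,\xi_2$ with $\rho$; it is exactly this matching that must be tracked carefully throughout.
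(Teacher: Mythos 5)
Your proposal is correct and follows the same route as the paper: collapse the 2nd LFD to the RLFD (Lemma \ref{11}, i.e.\ the vanishing of the two trace terms in \eqref{asim}) and then apply the fundamental theorems of Proposition \ref{22}. In fact you supply more detail than the paper does, since its proof of \eqref{2} consists of the word ``similarly,'' while your plan --- apply \eqref{asim} to $h=J_{0_+,t}^{\rho}f$, kill the traces with Lemma \ref{semigrp} and Proposition \ref{12}, then use Proposition \ref{22}(1) --- is the natural completion. One step should be repaired, however: your inclusion $J_{0_+,t}^{\xi_1}(L^1)\subseteq J_{0_+,t}^{\rho}(L^1)$, obtained by writing $J_{0_+,t}^{\xi_1}\psi=J_{0_+,t}^{\rho}\big(J_{0_+,t}^{\xi_1-\rho}\psi\big)$, needs $\xi_1\geq\rho$ (equivalently $\nu_1+\nu_2\geq1$), which the stated hypotheses do not guarantee: $\rho=9/10$, $\nu_1=1/20$, $\nu_2=1/10$ satisfies all of them, yet $\xi_1=1/20<\rho$. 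The fix is to use the two membership hypotheses jointly rather than singly: since $f=J_{0_+,t}^{\xi_1}\psi$ forces $f_1=\frac{d}{dt}J_{0_+,t}^{1-\xi_1}f=\psi$ a.e., the assumption $f_1=J_{0_+,t}^{\xi_2}\chi$ with $\chi\in L^1$ gives $f=J_{0_+,t}^{\xi_1+\xi_2}\chi=J_{0_+,t}^{\rho+\nu_1}\chi=J_{0_+,t}^{\rho}\big(J_{0_+,t}^{\nu_1}\chi\big)\in J_{0_+,t}^{\rho}(L^1)$, because $\xi_1+\xi_2=\rho+\nu_1$. The same factorization disposes of both traces in part \eqref{2} at once: $J_{0_+,t}^{1-\xi_1}h=J_{0_+,t}^{1+\rho}f_1$ and $J_{0_+,t}^{\nu_2}\frac{d}{dt}J_{0_+,t}^{1-\xi_1}h=J_{0_+,t}^{1+\rho}\chi$ are fractional integrals of order $1+\rho>1$ of $L^1$ functions, hence continuous and equal to zero at $t=0$, exactly as your plan requires.
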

\begin{proof}
	From Proposition \ref{12}, we have
	\begin{align*}
		f\in J_{0_+,t}^{\xi_{1}}(L^{1})&, \qquad \Rightarrow \qquad J_{0_+,t}^{1-\xi_{1}}f\in AC([0,1]),\; \\
		f_{1}\in J_{0_+,t}^{\xi_{2}}(L^{1})&,\qquad \Rightarrow \qquad J_{0_+,t}^{1-\xi_{2}}f_{1}\in AC([0,1]).
	\end{align*}
	By virtue of Equation \eqref{asim}, we have
	\begin{align*}
		\left({}^{2L}D_{0_+,t}^{\rho,(\nu_{1},\nu_{2})}f\right)(t)=&
		{}^{RL}D_{0_+}^{\rho}f(t)-\frac{\big(J_{0_+,t}^{1-\xi_{1}}f\big)(t)|_{t=0}}{\Gamma(\nu_{1}+\nu_{2}-1)} t^{\nu_{1}+\nu_{2}-2}-\frac{\Big(J_{0_+,t}^{1-\xi_{2}}f_{1}\Big)(t)|_{t=0}}{\Gamma(\nu_{1})} t^{\nu_{1}-1}. 	
	\end{align*}
	Due to Lemma \ref{11}, we obtain
	\begin{equation*}
		\left({}^{2L}D_{0_+,t}^{\rho,(\nu_{1},\nu_{2})}f\right)(t)=
		{}^{RL}D_{0_+}^{\rho}f(t).	
	\end{equation*}
	Applying RLFI on the left-hand side, we have
	\begin{equation*}
		\left(J_{0_+,t}^{\rho}{}^{2L}D_{0_+,t}^{\rho,(\nu_{1},\nu_{2})}f\right)(t)=
		J_{0_+,t}^{\rho}{}^{RL}D_{0_+}^{\rho}f(t).	
	\end{equation*}
	Using Proposition \ref{22}, we get the equality of Equation \eqref{1}
	\begin{align*}
		\left(J_{0_+,t}^{\rho}{}^{2L}D_{0_+,t}^{\rho,(\nu_{1},\nu_{1})}f\right)(t)=&
		f(t).
	\end{align*}
	Similarly, we can prove the equality of Equation \eqref{2}.
	\begin{align*}
		\left({}^{2L}D_{0_+,t}^{\rho,(\nu_{1},\nu_{2})}J_{0_+,t}^{\rho}f\right)(t)=&
		f(t).
	\end{align*}
\end{proof}
\subsection{Diffusion Equation}
In this section, We will discuss an inverse problem. 
\vspace{0.3cm}

Consider the following initial boundary value problem 
\begin{eqnarray}\label{peoblem}
	\left.
	\begin{aligned}
		\Big({}^{2L}D_{0_+,t}^{\rho,(\nu_{1},\nu_2)}u\Big)(x,t)&=u_{xx}(x,t)+f(x),\qquad (x,t)\in (0,1)\times(0,T),\\
		u(1,t)=0,\quad  &u_{x}(0,t)= u_{x}(1,t),\qquad\qquad x\in [0,T],\\
		J_{0_+,t}^{1-\xi_{2}}\frac{d}{dt}J_{0_+,t}^{1-\xi_{1}}&u(x,t)|_{t=0} = \phi(x), \qquad\qquad x \in [0,1], \\
		J_{0_+,t}^{1-\xi_{1}}&u(x,t)|_{t=0} = \psi(x),\qquad\qquad x \in [0,1]. 
	\end{aligned}
	\right\}
\end{eqnarray}
where ${}^{2L}D_{0_+,t}^{\rho,(\nu_{1},\nu_2)}$ stands for the 2nd LFD of order $\rho\in(0,1)$ and type $(\nu_{1},\nu_{2})$, that is given by Eq. \eqref{2lfd}.
\smallskip

\noindent
The inverse problem is to recover a pair of function $\{u(x,t), f(x)\}$ for the given system \eqref{peoblem}. For finding the source term $f(x)$, we need the following additional data
\begin{eqnarray}\label{overdeterminationcnd}
	u(x,T)&=&\varphi(x).
\end{eqnarray}
Using formula \eqref{ass}, problem \eqref{peoblem} can be written in the following form
\begin{eqnarray}\label{peoblemRL}
	\left.
	\begin{aligned}
		\Big({}^{RL}D_{0_+,t}^{\rho} u\Big)(x,t)&=u_{xx}(x,t)+f(x),\qquad (x,t)\in (0,1)\times(0,T),\\
		u(1,t)=0,\quad  &u_{x}(0,t)= u_{x}(1,t),\qquad\qquad x\in [0,T],\\J_{0_+,t}^{1-\rho}u(x,t)|_{t=0}&=0,\qquad\qquad\qquad\qquad x \in [0,1],\\
		J_{0_+,t}^{1-\xi_{2}}\frac{d}{dt}J_{0_+,t}^{1-\xi_{1}}&u(x,t)|_{t=0} = \phi(x), \qquad\qquad\qquad x \in [0,1], \\
		J_{0_+,t}^{1-\xi_{1}}&u(x,t)|_{t=0} = \psi(x),\qquad
		\qquad\qquad x \in [0,1]. 
	\end{aligned}
	\right\}
\end{eqnarray}
Now, we will obtain the solution of the given system \eqref{peoblemRL} through an bi-orthogonal expansion obtained by using the method of
variable separation. The spectral problem corresponding to \eqref{peoblem}
\begin{eqnarray}\label{sproblem}
	X^{''}(x)&=&-\lambda X(x), \qquad X(1)=0,\qquad
	X^{'}(0)=X^{'}(1),
\end{eqnarray}
where $\lambda$ is the spectral parameter.
\smallskip

\noindent The eigenvalues of the system \eqref{sproblem} are $\lambda_{k}=2\pi k, \;\; k\in\mathbb{N}$.
The set of eigenfunctions of the spectral problem are given by
\begin{eqnarray}\label{eigenfunction}
	\Big\{X_{0}(x)&=& 2(1-x),\; X_{1k}(x)=4(1-x)\cos(\lambda_{k}x),\; X_{2k}(x)=4\sin (\lambda_{k}x)\Big\}.
\end{eqnarray}
The set of eigenfunctions in Eq. \eqref{eigenfunction} form a Riesz basis for the space $L^{2}((0,1))$ but not orthogonal, see \cite{Ionkin}. So, the conjugate problem is given by
\begin{eqnarray}\label{sproblem1}
	Y^{''}(x)&=&-\lambda Y(x),\qquad Y^{'}(1)=0,\qquad
	Y(0)=Y(1),
\end{eqnarray}  
The set of eigenfunctions of Eq. \eqref{sproblem1} are given by
\begin{eqnarray}\label{eigenfunction1}
	\Big\{Y_{0}(x)= 1, \quad Y_{1k}(x)=\cos(\lambda_{k}x),\quad Y_{2k}(x)=x\sin (\lambda_{k}x))\Big\},
\end{eqnarray}
\smallskip

\noindent  The sets of eigenfunctions given by Eqs. \eqref{eigenfunction} and \eqref{eigenfunction1} form a bi-orthogonal system of functions, see \cite{moise}.
\newline
\noindent \textbf{Series Representation Solution:} The solution of the inverse problem can be expressed as
\begin{eqnarray}
	u(x, t) &=& U_0(t)X_0(x) + \sum_{k=1}^{\infty}
	U_{1k}(t)X_{1k}(x) + \sum_{k=1}^{\infty}U_{2k}(t)X_{2k}(x),\label{vsolution}\\
	f(x)&=&f_0 X_0(x) + \sum_{k=1}^{\infty}
	f_{1k}X_{1k}(x) + \sum_{k=1}^{\infty}f_{2k}X_{2k}(x),\label{fsolution}
\end{eqnarray}
where $U_0(t)$, $U_{1k}(t)$, $U_{2k}(t)$, $f_0$, $f_{1k}$ \& $f_{2k}$ are unknowns. By orthogonality of eigenfunctions and due to formula \eqref{ass} and Eq. \eqref{peoblemRL}, we get

\begin{align}
U_{0}(t)=&\frac{\phi_{0}t^{\rho+\nu_{1}-1}}{\Gamma(\rho+\nu_{1})}+\frac{\psi_{0}t^{\rho+\nu_{1}+\nu_{2}-2}}{\Gamma(\rho+\nu_{1}+\nu_{2}-1)}+\frac{f_{0}t^{\rho}}{\Gamma(\rho+1)},\label{T1}\\
U_{1k}(t)=&{\phi_{1k}}t^{\rho+\nu_{1}-1}{E}_{{\rho},\rho+\nu_{1}}(-\lambda^{2}_{k}t^{\rho})+{\psi_{1k}}t^{\rho+\nu_{1}+\nu_{2}-2}{E}_{{\rho},\rho+\nu_{1}+\nu_{2}-1}(-\lambda^{2}_{k}t^{\rho})+ f_{1k}t^{\rho}{E}_{{\rho, \rho+1}}(-\lambda^{2}_{k}t^{\rho}),\label{T2}\\
U_{2k}(t)=& {\phi_{2k}}t^{\rho+\nu_{1}-1}{E}_{{\rho},\rho+\nu_{1}}(-\lambda^{2}_{k}t^{\rho})+{\psi_{2k}}t^{\rho+\nu_{1}+\nu_{2}-1}l{E}_{{\rho},\rho+\nu_{1}+\nu_{2}-1}(-\lambda^{2}_{k}t^{\rho})+2 \lambda_{k}U_{1k}(t)\ast t^{\rho-1}{E}_{{\rho, \rho}}(-\lambda^{2}_{k}t^{\rho})\nonumber\\&+f_{2k} t^{\rho}{E}_{{\rho, \rho+1}}(-\lambda^{2}_{k}t^{\rho}),\label{T3}
\end{align}	
where ${E}_{{\rho, \nu}}(-\lambda_{k}t^{\rho})$ is the two parameter Mittag-Leffler function \cite{Gorenflo-Kilbas-Minardi} which is defined as
$$E_{\rho,\nu}(-\lambda_{k}t^{\rho})=\sum_{k=0}^{\infty}\frac{(-\lambda_{k}t^{\rho})^k}
{\Gamma(\rho k+\nu)},\hspace{1cm} Re(\rho)>0,\qquad\nu \in \mathbb{C},$$
and $$f_{j}=\int_{0}^{1}{f(x)Y_{j}(x)}ds,\qquad j=0,1k, 2k, \;\;k\in\mathbb{N}.$$ 

\smallskip

\noindent By virtue of Eq. \eqref{overdeterminationcnd}, the unknowns of $f_{0}, f_{1k}$ and $ f_{2k}$  yield to have the following expressions
\begin{align}
f_{0}=&\frac{\Gamma(\rho+1)}{T^{\rho}}\bigg(\varphi_{0}-\frac{\phi_{0}t^{\rho+\nu_{1}-1}}{\Gamma(\rho+\nu_{1})}-\frac{\psi_{0}t^{\rho+\nu_{1}+\nu_{2}-2}}{\Gamma(\rho+\nu_{1}+\nu_{2}-1)}\bigg),\label{f0}\\
f_{1k}=&\frac{1}{\mathcal{E}_{{\rho},\rho+1}(-\lambda^{2}_{k}T^{\rho})}\bigg(\varphi_{1k}-{\phi_{1k}}t^{\rho+\nu_{1}-1}{E}_{{\rho},\rho+\nu_{1}}(-\lambda^{2}_{k}t^{\rho})\nonumber\\&-{\psi_{1k}}t^{\rho+\nu_{1}+\nu_{2}-2}{E}_{{\rho},\rho+\nu_{1}+\nu_{2}-1}(-\lambda^{2}_{k}t^{\rho})\bigg),\label{f1}\\
f_{2k}=&\frac{1}{\mathcal{E}_{{\rho}, \rho+1}(-\lambda^{2}_{k}T^{\rho})}\bigg(\varphi_{2k}-{\phi_{2k}}t^{\rho+\nu_{1}-1}{E}_{{\rho},\rho+\nu_{1}}(-\lambda^{2}_{k}t^{\rho})\nonumber\\&-{\psi_{2k}}t^{\rho+\nu_{1}+\nu_{2}-2}{E}_{{\rho},\rho+\nu_{1}+\nu_{2}-1}(-\lambda^{2}_{k}t^{\rho})+2 \lambda_{k}U_{1k}(t)\ast t^{\rho-1}{E}_{{\rho, \rho}}(-\lambda^{2}_{k}t^{\rho})\bigg),\label{f2}
\end{align}
where $$\varphi_{j}=\int_{0}^{1}\varphi(x) Y_{j}(x)dx,\;\qquad j=0, 1k, 2k,\;\; k\in\mathbb{N}.$$
\smallskip

\noindent The similar solution, i.e., $\{u(x,t), f(x)\}$ can be obtained from the Problem \eqref{peoblem}. Let us mention the solution of Problem \eqref{peoblem} is discussed in \cite{Asim}.

Notice that the inverse problem is to determine the source term and temperature concentration for a diffusion equation with nth LFD is considered in \cite{Malik} and we can find the inverse problem using formula \eqref{nthf}  which is studied in \cite{Malik}.

\section{Conclusions} \label{sec:6}

The nth LFD proposed by Y. Luchko \cite{luchko} of order $\rho$ and $n$ parameters $\nu_{1},\nu_{2},...,\nu_{n}$ is proved to have an equivalent representation in term of RLFD under certain conditions stated in Theorem \ref{nthfunc}. From our results when $n=2$, the 2nd LFD is the generalization of the well-known HFD, CFD, and RLFD, the corresponding relationship between HFD (see, \cite{Kamocki}) and CFD has been obtained. In order to elaborate the significance of our results an inverse source problem involving 2nd LFD with non-local boundary conditions has been solved. We intend to investigate several useful identities in FC in the contest of $n$th LFD. 









\section*{\small
Conflict of interest} 

{\small
The authors declare that they have no conflict of interest.}




\bigskip  

\small 
\noindent
{\bf Publisher's Note}
Springer Nature remains neutral with regard to jurisdictional claims in published maps and institutional affiliations.


\begin{thebibliography} {99}
\normalsize 

\bibitem{Bourdin}
Bourdin, L., Idczak, D.: Fractional fundamental lemma and fractional integration by parts formula-applications to critical points of Bolza functionals and to linear boundary value problems. {Adv. Differ. Equ.} \textbf{20}(3–4), 213–232(2015). https://doi.org/10.57262/ade/1423055200

\bibitem{Furati}
Furati, K.M.,  Kassim, M.D.:  Existence and uniqueness for a problem involving Hilfer fractional derivative. {Comput. Math. with Appl.} \textbf{64}(6), pp.1616-1626(2012). https://doi.org/10.1016/j.camwa.2012.01.009

\bibitem{Gara}
Garra, R., Gorenflo, R., Polito, F., Tomovski, Z.: Hilfer-Prabhakar derivatives and some applications. \emph{Appl. Math. Comput.} \textbf{242}, 576-589(2014). https://doi.org/10.1016/j.amc.2014.05.129


\bibitem{Gorenflo-Kilbas-Minardi}
Gorenflo, R.,  Kilbas, A.A., Mainardi, F., Rogosin, S.V.: Mittag-Leffler Functions, Related Topics and Applications. Springer-Verlag, Berlin Heidelberg(2014).

\bibitem{Hilfer}	
Hilfer, R., Applications of Fractional Calculus in Physics. {World Scientific}, (2000).

\bibitem{hilfer1}
Hilfer, R., Luchko, Y., Tomovski, Z., Operational method for the solution of fractional differential equations with generalized Riemann-Liouville fractional derivatives. Fract. Calc. Appl. Anal. \textbf{12}(3), pp.299-318(2009).

\bibitem{id}
Idczak, D., Walczak, S.: Fractional sobolev spaces via Riemann-Liouville derivatives. {J. Funct. Spaces}, (2013). https://doi.org/10.1155/2013/128043

\bibitem{Malik}
Ilyas, A.,  Malik, S. A., Saif, S.: Recovering Source Term and Temperature Distribution for Nonlocal Heat Equation. {Appl. Math. Comput.} \textbf{439}, 127610(2023). https://doi.org/10.1016/j.amc.2022.127610

\bibitem{Asim}
Ilyas, A.,  Malik, S.A.: An inverse source problem for anomalous diffusion equation with generalized fractional derivative in time and nonlocal boundary conditions. {	Acta Appl. Math.} \textbf{181}(1), 1-15(2022). https://doi.org/10.1007/s10440-022-00532-8

\bibitem{Ionkin}
Ionkin, N. I.,  Moiseev, E. I.: A two-point boundary value problem for a heat conduction equation. \emph{Differ. Uravn.} \textbf{15}, 1284-1295(1979).

\bibitem{Kamocki}
Kamocki, R.: A new representation formula for the Hilfer fractional derivative and its application. {J. Comput. Appl. Math.} \textbf{308}, 39-45(2016). https://doi.org/10.1016/j.cam.2016.05.014

\bibitem{kilbas}
Kilbas, A.A., Srivastava, H. M., Trujillo, J.J.: Theory and Applications of Fractional Differential Equations. Elsevier Science, (2006).


\bibitem{luchko}
Luchko, Y.: Fractional derivatives and the fundamental theorem of fractional calculus. \emph{Fract. Cal. Appl. Anal.} \textbf{23}, 939–966(2020). https://doi.org/10.1515/fca-2020-0049

\bibitem{luchko1}
Luchko, Y.: On Complete Monotonicity of Solution to the Fractional Relaxation Equation with the n th Level Fractional Derivative. \emph{Math.} \textbf{8} No.9, 1561(2020). https://doi.org/10.3390/math8091561


\bibitem{moise}
Moiseev, E.I.: On the basis property of systems of sines and cosines. Doklady AN SSSR 275, No. 4, 794-798(1984).


\bibitem{Samko-Kilbas}
Samko, S.G., Kilbas, A.A., Marichev, D.I.: Fractional Integrals and Derivatives: Theory and applications. {Gordon and Breach Science Publishers}(1993).


\end{thebibliography}
\end{document}